\def\?[#1]{\textbf{[#1]}\marginpar{\Large{\textbf{??}}}}
\def\smallsection#1{\smallskip\noindent\textbf{#1}.}
\let\epsilon=\varepsilon 
\newcommand{\RR}{{\mathbb R}}
\newcommand{\NN}{{\mathbb N}}
\newcommand{\CC}{{\mathbb C}}
\newcommand{\SP}{{\mathbb S}}
\newcommand{\CI}{C^\infty}
\newcommand{\CIc}{C^\infty_{\rm{c}}}
\newtheorem{thm}{Theorem}
\newtheorem{prop}{Proposition}[section]
\newtheorem{lemm}[prop]{Lemma}
\numberwithin{equation}{section}
\DeclareMathOperator{\Ai}{Ai}
\DeclareMathOperator{\Res}{Res}
\DeclareMathOperator{\comp}{comp}
\let\Im=\Imag
\DeclareMathOperator{\loc}{loc}
\let\Re=\Real
\DeclareMathOperator{\supp}{supp}
\def\indic{\operatorname{1\hskip-3.15pt\relax l}}
\title{Resonances and lower resolvent bounds}
\author{Kiril Datchev}
\email{datchev@math.mit.edu}
\author{Semyon Dyatlov}
\email{dyatlov@math.mit.edu}
\address{Department of Mathematics, Massachusetts Institute of Technology,
Cambridge, MA 02139}
\author{Maciej Zworski}
\email{zworski@math.berkeley.edu}
\address{Department of Mathematics, University of California,
Berkeley, CA 94720, USA}
\begin{document}

\begin{abstract}We show how the presence of resonances close to the real
axis implies exponential lower bounds on the norm of the cut-off 
resolvent on the real axis.
\end{abstract}

\maketitle

\addtocounter{section}{1}
\addcontentsline{toc}{section}{1. Introduction}

In this note we establish exponential lower bounds on the scattering resolvent on
the real line. We show that these lower bounds can be understood in
terms of resonances close to the real axis.

To fix the concepts, consider a \emph{semiclassical Schr\"odinger operator}
on $\mathbb R^n$:
\begin{equation}
  \label{e:p-s}
P(h)=-h^2\Delta+V(x),\quad x\in\mathbb R^n,\quad
V\in \CIc(\mathbb R^n;\mathbb R), 
\end{equation}
\begin{equation}
  \label{e:as-intro}
\begin{gathered}
\supp V\subset B(0,R_0);\quad
V(0)=V_0>0,\ V'(0)=0,\
V''(0)>0;\\
x\cdot V'(x)\leq 0\quad\text{on }\{V\leq V_0\},\quad
x\cdot V'(x)<0\quad\text{on }\{V=V_0\}\setminus \{0\}.
\end{gathered}
\end{equation}
Take $R>R_0$ and define the cutoffs
\begin{gather}
\label{eq:cutoffs}
\begin{gathered}
\chi=\indic_{B(0,R_0)},\quad
\psi=\indic_{B(0,R+1)\setminus B(0,R-1)}.
\end{gathered}
\end{gather} 
Theorem \ref{t:2} in \S\ref{s:ame} 
shows that for any $ R > R_0 $ there exists a constant $c>0$ independent of $h$
and $ E_0 (h) = V_0 + \mathcal O ( h ) $ such that
\begin{equation}
\label{eq:res11}
\|  \chi ( P(h) - E_0 ( h) \pm i 0 )^{-1} \chi \|_{L^2 \to L^2 }
\geq \exp (  c /h ) ,
\end{equation}
\begin{equation}
\label{eq:res21}
\|  \psi ( P(h)  -  E_0 ( h )  \pm i 0 )^{-1} \chi \|_{L^2 \to L^2 }
\geq \exp( c  / h). 
\end{equation}
A very general exponential {\em upper} bound 
corresponding to \eqref{eq:res11} was first proved by Burq \cite{B},
with generalizations by Vodev \cite{v}, and more recently by Datchev
\cite{D}. The lower bound is immediate from much easier arguments
involving quasimodes. The ``non-trapping'' upper bound (for $R$ large enough) 
\begin{equation}
\label{eq:res31}
\|  \psi ( P(h)  - E_0 ( h ) \pm i 0 )^{-1} \psi\|_{L^2 \to L^2 }
\leq \frac{ C_0 }{  h}, 
\end{equation}
was again given by Burq \cite{B} (with a $ \log 1/h
$ loss) and Vodev \cite{v} -- see \cite{D} for a neat new proof. 

It is \eqref{eq:res21} which seems to be the novel aspect. It shows
that having a one sided cutoff to the exterior of the interaction 
region {\em cannot} prevent exponential blow up of the resolvent.

The method also applies to the case of Riemannan manifolds, $ (M , g ) $, considered recently by
Rodnianski--Tao \cite{rt} -- see Fig.~\ref{f:3}. In that case  
the support of~$ V $  is replaced in~\eqref{eq:cutoffs} by the set where the 
metric is different from the Euclidean metric, and we 
obtain a sequence of $ \lambda_k \to \infty $ such that
\begin{equation}
\label{eq:rota1}
\| \psi (  \Delta_g - \lambda_k \pm i 0 ) ^{-1} \chi \|_{L^2 ( M ) \to
  L^2 ( M ) } \geq e^{ c\sqrt {\lambda_k } } . 
\end{equation}
See Theorem~\ref{t:3} in~\S\ref{s:ame} for details.

\begin{figure}
\begin{center}
\includegraphics[width=2.2in]{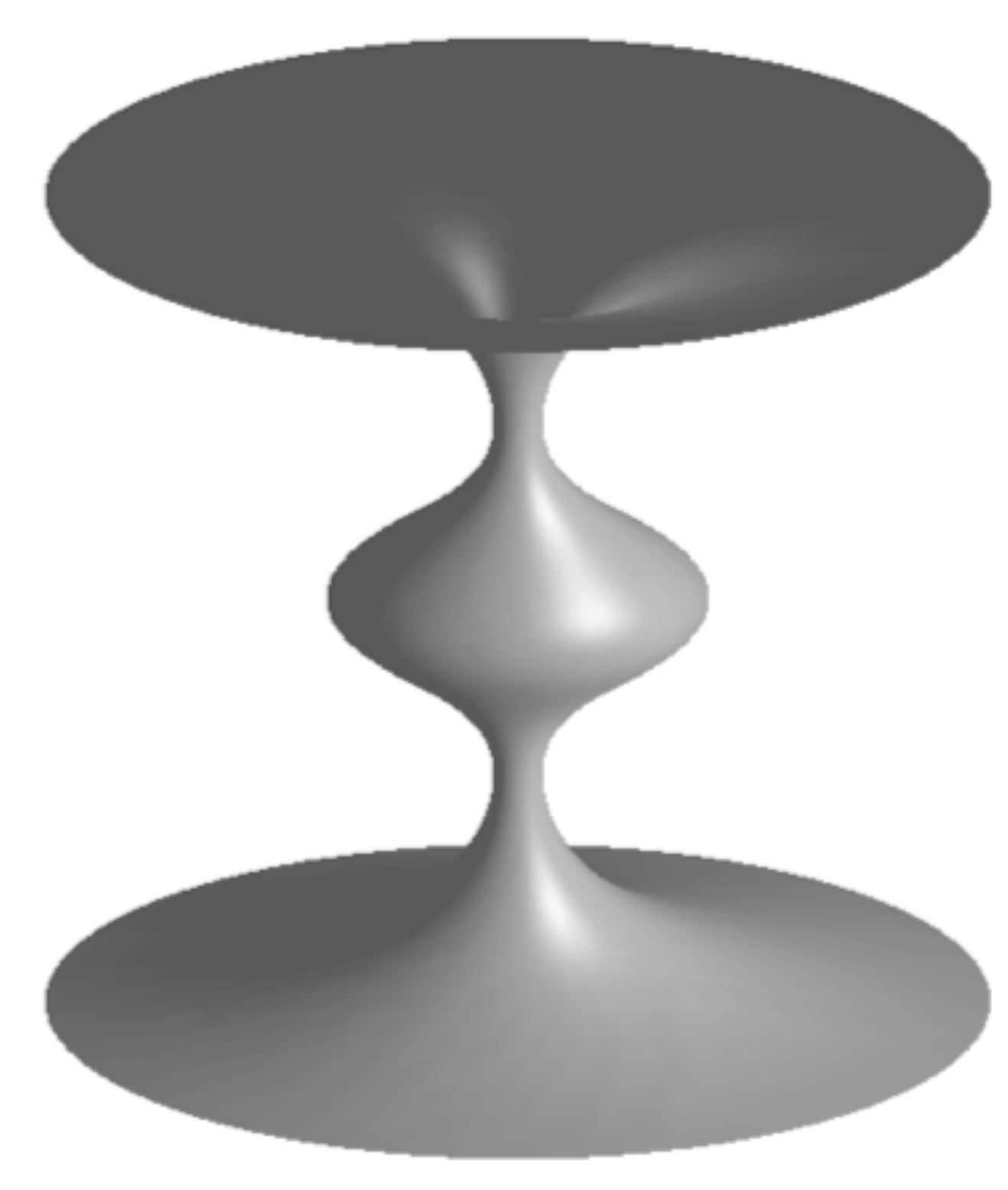} \hspace{0.5in} \includegraphics[width=2.5in]{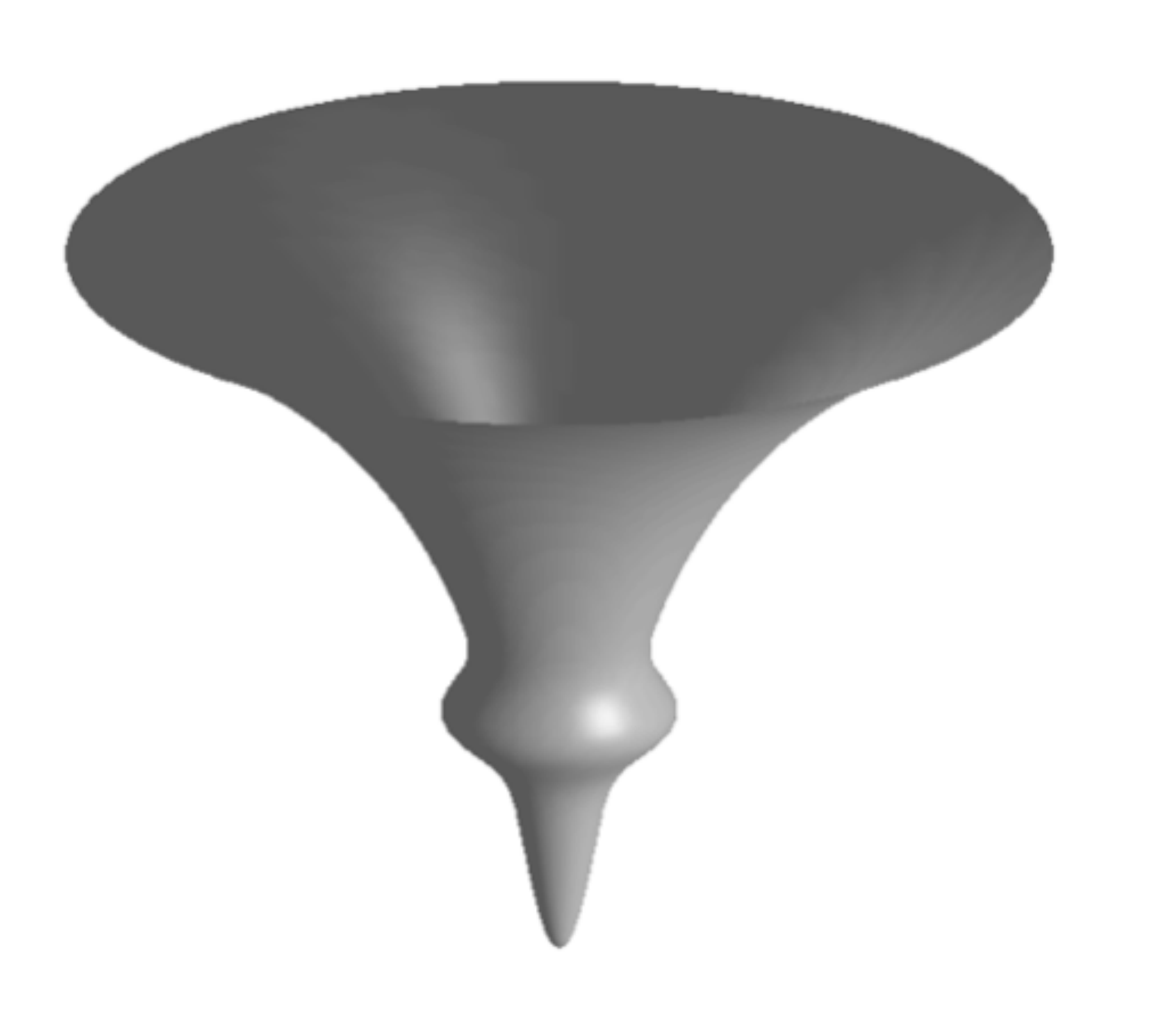}
\end{center}
\caption{Examples of manifolds for which the estimate
  \eqref{eq:rota1} holds: on the left a surface of revolution
with two Euclidean ends to which Theorem \ref{t:3} applies directly;
on the right a surface with one end to which a modification 
of the same method applies (see \cite{omg}). The same
examples work in any dimension.
\label{f:3}}
\end{figure}

The reason behind these estimates is the presence of resonances close
to real axis.
The resolvent
$$
R_h(z)=(P(h)-z)^{-1}:L^2(\mathbb R^n)\to H^2(\mathbb R^n),\quad
z\not\in [0,+\infty),
$$
has a meromoprhic continuation to the Riemann surface of $\sqrt{z}$ for $n$ odd,
and to the Riemann surface of $\log z$ for $n$ even, as a family of operators
$L^2_{\comp}(\mathbb R^n)\to H^2_{\loc}(\mathbb R^n)$, see for
example~\cite{res} and references given there.
\emph{Resonances}, defined as the
poles of this continuation,
replace discrete spectral data for problems on non-compact
domains. For the situations considered here, in particular
for the case~\eqref{e:p-s}, Rellich's theorem (see~\cite{res}) shows that there are no
resonances on the positive real axis, that is, the
operators $(P(h)-E\pm i0)^{-1}$ are well-defined for $E>0$.

Theorem \ref{t:1} in~\S\ref{s:general} gives general lower bounds based on existence
of resonances with certain properties. It is then applied in Theorems
\ref{t:2} and \ref{t:3} in~\S\ref{s:ame} to obtain examples, in particular of
Riemannian manifolds with Euclidean ends.

The simple proofs here are based on previous work on scattering resonances, in
particular those by 
Bony--Michel \cite{bm}, G\'erard--Martinez \cite{gm},
Helffer--Sj\"ostrand \cite{hsj}, Tang-Zworski \cite{tz} and  Nakamura--Stefanov--Zworski \cite{nsz}.
To make the basic idea accessible,
we present in \S\ref{ele} an elementary and self-contained one dimensional
example which captures the basic reason for \eqref{eq:res21}; the argument
of~\S\ref{ele} does not directly use resonances though it could be
used to show their existence.



\smallsection{Acknowledgements} 
We would like to thank Andr\'e Martinez and Andr\'as Vasy for
helpful discussions of resolvent estimates. 
We are also grateful for the support by 
a National Science Foundation postdoctoral fellowship (KD), 
a Clay Research Fellowship (SD) and by 
a National Science Foundation grant DMS-1201417 (MZ).

\section{An explicit example}
\label{ele}

The following one dimensional example shows the reasons for
\eqref{eq:res21} in an explicit setting:
\begin{thm}
  \label{t:1d}
Let $V\in \CIc(\mathbb R)$ be a nonnegative potential satisfying the following conditions (see Figure~\ref{f:V}):
\begin{gather}
\label{eq:potV}
\begin{gathered} 
V ( x ) = V ( -x ) , \ \ \
\text{$ V(x)=x^2+1$ \ for $x\in [-1,1]$;} \\
\text{$V(x)=4-x$ for $x\in [2, 3.5]$; \ \ 
$V(x)<1$ for $x> 3$;\ \ 
$\supp V\subset [-5,5]$.}
\end{gathered}
\end{gather}
Put $R_0=4$, fix $R>5$, and define $\chi,\psi$ by~\eqref{eq:cutoffs}.
Then there exists $c>0$
and families $E_0(h)=1+\mathcal O(h)$,
$u(h),f(h)\in \CIc(\mathbb R)$ such that
\begin{equation}
  \label{e:1d}
\begin{gathered}
(P(h)-E_0(h))u=f,\quad
f=\psi f;\\
\|\chi u\|_{L^2(\RR)}=1,\quad
\|f\|_{L^2(\RR)}\leq e^{-c/h}.
\end{gathered}
\end{equation}
\end{thm}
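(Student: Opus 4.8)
The plan is to build an \emph{exact} even solution $u_E$ of $(P(h)-E_0)u_E=0$ on $\RR$ that, once normalized, is of size $\sim1$ near the bottom of the well at $x=0$ but exponentially small on the annulus $A:=\{x\in\RR:R-1\le|x|\le R+1\}$; then $u$ is $u_E$ times a smooth cutoff $\Theta$ equal to $1$ for $|x|\le R-1/2$ and to $0$ for $|x|\ge R+1/2$, so that $f:=(P(h)-E_0)u=[P(h),\Theta]u_E$ is automatically supported in $A$ and comparable there to $u_E$. To pin down $E_0$ and $u_E$: the potential is tailored so that $e^{-x^2/(2h)}$ solves $(P(h)-(1+h))u=0$ \emph{exactly} on $[-1,1]$, and so that for $E_0$ near $1$ the unique turning point to the right of the well, $x_0=4-E_0\in(2,3.5)$, lies in the region where $V$ is linear, where the equation is literally Airy's equation. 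Put $b=R_0=4$; since $V<1$ for $x>3$, once $E_0\approx1$ the quantity $E_0-V$ is bounded below by a positive constant on $[b,\infty)$. Let $E_0=E_0(h)$ be the eigenvalue of $P(h)$ on $[0,b]$ with a Neumann condition at $0$ and a Dirichlet condition at $b$ that is nearest to $1$, let $u_E$ be its eigenfunction normalized by $u_E(0)=1$, and extend $u_E$ to a global even solution of $(P(h)-E_0)u_E=0$ by continuing the ODE past $x=\pm b$. Since this truncated operator differs from the harmonic oscillator $-h^2\partial_x^2+x^2+1$ (on $[0,\infty)$, Neumann at $0$) only at distance $\ge1$ from the well, semiclassical perturbation theory gives $E_0=1+h+\mathcal O(e^{-c_0/h})=1+\mathcal O(h)$ and $u_E=e^{-x^2/(2h)}(1+o(1))$ on $[-1,1]$; hence $\|\chi u_E\|_{L^2}=\|u_E\|_{L^2[-b,b]}=(\pi h)^{1/4}(1+o(1))\sim h^{1/4}$.

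The heart of the matter is the estimate
\[
\|u_E\|_{L^2(A)}+h\|\partial_x u_E\|_{L^2(A)}\le C_\epsilon\,e^{-(d-\epsilon)/h}\qquad(\epsilon>0),
\]
where $d:=\int_0^{x_0}\sqrt{(V-E_0)_+}\,dx$ is the Agmon width of the barrier around the well, bounded below by a positive constant uniformly for small $h$. Because $u_E$ is a genuine $L^2$ eigenfunction of the truncated operator — this is where the Dirichlet condition at $b$ is used, and it is what forbids an exponentially growing component from being carried through the barrier — the Agmon (tunnelling) estimate applies in the classically forbidden region and yields $|u_E(x)|+|h\partial_x u_E(x)|\le C_\epsilon e^{-(d-\epsilon)/h}$ for $x$ just inside the turning point $x_0$. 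The turning-point connection — explicit here, since $V$ is linear near $x_0$ so $u_E$ is exactly a combination of Airy functions in a fixed neighbourhood of $x_0$ — carries this smallness, up to losses made negligible by shrinking the neighbourhood and absorbing polynomial-in-$h$ factors into $\epsilon$, to the oscillatory side $x>x_0$. Finally, on $[x_0+\delta,\infty)$ (fixed small $\delta>0$), where $E_0-V$ is bounded below, the quantity $I:=|u_E|^2+h^2|\partial_x u_E|^2/(E_0-V)$ obeys $|I'|\le I\,|V'|/(E_0-V)$, so a Gr\"onwall estimate shows $I$ grows by at most an $h$-independent factor, propagating the bound to all larger $x$, in particular onto $A$ (note $R-1>4=b$).

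To conclude, pick $\Theta$ as above (even, $C^\infty$), and set $u:=\Theta u_E/\|\chi u_E\|_{L^2}$ and $f:=(P(h)-E_0)u$. Since $\Theta\equiv1$ on $\supp\chi$ we have $\chi u=\chi u_E/\|\chi u_E\|_{L^2}$, so $\|\chi u\|_{L^2}=1$; since $(P(h)-E_0)u_E\equiv0$ on $\RR$, $f=[P(h),\Theta]u_E/\|\chi u_E\|_{L^2}=-h^2(\Theta''u_E+2\Theta'\partial_x u_E)/\|\chi u_E\|_{L^2}$ is supported in $\{R-1/2\le|x|\le R+1/2\}$, hence $f=\psi f$; and
\[
\|f\|_{L^2}\le\frac{Ch^2\big(\|u_E\|_{L^2(A)}+\|\partial_x u_E\|_{L^2(A)}\big)}{\|\chi u_E\|_{L^2}}\le\frac{C h\,e^{-(d-\epsilon)/h}}{h^{1/4}}\le e^{-c/h}
\]
for small $h$, with $c:=d-2\epsilon>0$ (bounded below independently of $h$). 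Taking $u(h),f(h)\in\CIc(\RR)$ to be these functions finishes the proof.

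I expect the main obstacle to be the displayed decay estimate of the second paragraph: one must control $u_E$ and $h\partial_x u_E$ not just inside the barrier — where the Agmon energy method is routine, with only the mild wrinkle that the turning points $\pm\sqrt{E_0-1}\approx\pm\sqrt h$ sit inside $[-1,1]$ so that $V-E_0$ changes sign there — but all the way through the non-trapping exterior out to $A$, which needs the turning-point (Airy) connection at $x_0$ together with the propagation estimate, and, crucially, uses that $u_E$ is an honest eigenfunction so that no exponentially growing mode is excited in the barrier (the naive forward continuation of $e^{-x^2/(2h)}$ at $E_0=1+h$ instead blows up past the barrier, which is why $E_0$ must be selected by the Dirichlet quantization). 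A secondary point requiring care is the perturbation-theoretic identification of $(E_0,u_E)$ with the cut-off harmonic-oscillator ground state underlying $E_0=1+\mathcal O(h)$ and $\|\chi u_E\|_{L^2}\sim h^{1/4}$.
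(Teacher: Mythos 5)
Your overall architecture is the same as the paper's (an eigenfunction of a truncated self-adjoint problem, exponential smallness of it in the outer region, then a cutoff so that $f=[P,\Theta]u_E$ lives where $\psi=1$), but the step you yourself identify as the heart of the matter is where the argument genuinely breaks. At energy $E_0\approx 1$ the classically allowed region of your truncated operator on $[0,b]$, $b=4$, is \emph{not} just the well near $x=0$: since $V(x)<1$ for $x>3$, it also contains the ``shelf'' $(3,b]$ adjacent to the turning point $x_0\approx 3$. The Agmon estimate for an $L^2$-normalized eigenfunction bounds $|u_E(x)|$ by $e^{-(\phi(x)-\epsilon)/h}$ with $\phi$ the Agmon distance to the \emph{whole} allowed region $\{V\le E_0\}$, and that distance tends to $0$ as $x\to x_0^-$; it does not give $e^{-(d-\epsilon)/h}$ (full barrier width $d$) just inside the turning point. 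The bound you want holds only if the eigenfunction carries no appreciable mass on the shelf, i.e.\ only if the well quasi-eigenvalue $1+h$ is non-resonant with the Dirichlet spectrum of the shelf region between $x_0$ and $b$. Those shelf eigenvalues are spaced $\sim h$ and sweep past $1+h$ as $h\downarrow 0$; with $b=4$ fixed independently of $h$ you cannot rule out, for arbitrarily small $h$, an exponentially small splitting in which the eigenvalue nearest $1$ (indeed, a shelf eigenvalue may even be nearer to $1$ than $1+h$ is) has its eigenfunction largely or entirely supported on the shelf. In that case, after your normalization $u_E(0)=1$, the outer amplitude is comparable to or exponentially larger than the well amplitude, $\|u_E\|_{L^2(A)}$ is not $e^{-c/h}\|\chi u_E\|_{L^2}$, and the final estimate on $\|f\|_{L^2}$ fails. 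Your Airy connection and the Gr\"onwall propagation on $[x_0+\delta,\infty)$ are fine, but they only transport whatever bound you have at the turning point, so the gap is fatal as stated.

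This is exactly the difficulty the paper's proof is organized around: it takes the Dirichlet endpoint $\widetilde R(h)=R+\mathcal O(h)$ to be $h$-dependent and chooses it so that the quantization condition for the outer region fails by a definite margin, see \eqref{e:noquant}; then, instead of Agmon, a Wronskian comparison of the eigenfunction with the exponentially small Airy-type solution $w$ on $[2,3.5]$, combined with the Dirichlet condition at $\widetilde R(h)$, forces the outer amplitude $|\mathbf v(3.5)|$ to be $\mathcal O(e^{-c/h})$, which is \eqref{e:key}. To repair your proof you would need an analogous non-resonance ingredient --- e.g.\ make your Dirichlet point $b(h)$ vary over an interval of length $\sim h$ and show it can be chosen so that no shelf eigenvalue is within, say, $h/C$ of $1+h$ (or argue via the Wronskian as the paper does); mere self-adjointness of the truncated problem and the standard tunnelling estimate do not suffice. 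The ``secondary point'' you flag (identifying $(E_0,u_E)$ with the cut-off oscillator ground state) is in fact the same issue in disguise, since eigenfunction perturbation theory needs a spectral gap that the shelf can destroy.
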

Note that~\eqref{e:1d} implies (henceforth suppressing the dependence on $h$)
\begin{gather}
  \label{e:lower-bound-1}
\|\chi R_h(E_0\pm i0)\psi\|_{L^2\to L^2}\geq e^{c/h},\\
  \label{e:lower-bound-2}
\|\psi R_h(E_0\pm i0)\chi\|_{L^2\to L^2}\geq e^{c/h}.  
\end{gather}
Indeed,
since $u\in \CIc$ and $f=\psi f$, we have $\chi u=\chi R_h(E_0\pm i0)\psi f$;
this shows~\eqref{e:lower-bound-1}. The bound~\eqref{e:lower-bound-2} follows
since $R_h(E_0\pm i0)^*=R_h(E_0\mp i0)$.

\begin{figure}
\includegraphics[scale=1]{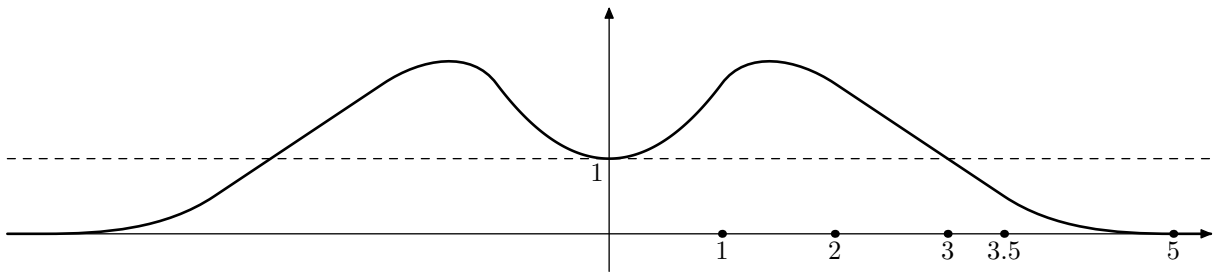}
\caption{The potential $V$ used in Theorem \ref{t:1d}.}
\label{f:V}
\end{figure}

The key component of the proof of Theorem~\ref{t:1d}
is the existence of \emph{quasimodes} for the operator $P-E_0$, namely functions that
satisfy $(P-E_0)v=\mathcal O(e^{-c/h})$:
\begin{lemm}
  \label{l:main}
There exist $h$-dependent families $E_0=E_0(h)\in\mathbb R$, and
$v=v(r;h)\in C^\infty([-R,R])$, such that $E_0=1+\mathcal O(h)$
and for some $C,c>0$,
$$
(P-E_0)v=0,\quad
\|v\|_{L^2(-3.5,3.5)}\geq C^{-1},\quad
\|v\|_{H^1_h(\{R-1<|x|<R\})}\leq Ce^{-c/h}.
$$
\end{lemm}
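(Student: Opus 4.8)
The plan is to produce $v$ as the (renormalized) ground-state quasimode of the ``well in an island'' built from the nondegenerate minimum of $V$ at $x=0$, the barrier $\{V>E_0\}$ to its right, and the unbounded region $\{V<E_0\}$ beyond. Since $V$ is even I would look for $v$ even, i.e.\ solve $(P-E_0)v=0$ on $[0,R]$ with $v(0)=1$, $v'(0)=0$, and reflect to $[-R,R]$ (smoothness at $0$ is automatic). For $E_0=1+h+o(h)$ the classically forbidden region is, to leading order, an interval $\approx(\sqrt h,3)$ whose Agmon width $S_0=S_0(h):=\int_{\{V>E_0\}}\sqrt{V-E_0}\,dx$ converges as $h\to0$ to $\int_0^1 x\,dx+\int_1^2\sqrt{V-1}\,dx+\int_2^3\sqrt{3-x}\,dx$ $\bigl(\ge\tfrac12+\tfrac23\bigr)$; in particular $S_0$ is bounded below by a positive constant, and it is the exponent in the final bounds (one may take any $c<S_0$).

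I would analyze three regions, exploiting the explicit form of $V$ wherever it helps. (i) On $[0,1]$, where $V=x^2+1$, the even solution $v_E$ is an explicit Weber (parabolic cylinder) function of $x\sqrt{2/h}$; at $E=1+h$ it is exactly $e^{-x^2/(2h)}$, with data $(v_E(1),hv_E'(1))=e^{-1/(2h)}(1,-1)$, and for $E=1+h+O(e^{-c/h})$ it is an exponentially small perturbation, with data $e^{-1/(2h)}(1,-1)\bigl(1+O(h)\bigr)$. (ii) On the barrier $[1,3]$ I would use WKB: the solution $\chi_E$ that is recessive in the barrier from the well side (decaying into it as $x$ increases) is canonically defined and decays across $[1,3]$ by $\exp(-h^{-1}\int_1^{3}\sqrt{V-E_0}\,dx)$ up to algebraic factors; and since $V=4-x$ is linear near the right turning point $x\approx3$, $\chi_E$ is there an exact Airy function $\Ai$, which connects cleanly to an oscillation of amplitude $\sim e^{-S_0/h}$ beyond the barrier. (iii) On $[3,R]$, where $V<E_0$, solutions oscillate with essentially constant amplitude, so on this set $|v|$ and $|hv'|$ are of the order of the amplitude at the right turning point.

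The step I expect to be the main obstacle is the \emph{connection across the barrier}: choosing $E_0$ so that $v_{E_0}$ carries no admixture of the solution dominant for the barrier --- such an admixture, amplified by $\exp(+h^{-1}\int_1^{3}\sqrt{V-E_0}\,dx)$, would swamp the recessive part in region (iii) (note $\int_2^3\sqrt{3-x}\,dx=\tfrac23>\tfrac12$, so this really matters). Concretely, one compares the log-derivative $m_{\mathrm{int}}(E):=hv_E'(1)/v_E(1)$ with $m_{\mathrm{rec}}(E):=h\chi_E'(1)/\chi_E(1)$: the latter varies slowly ($\approx-\sqrt{V(1)-E}$), whereas $m_{\mathrm{int}}$ varies extremely fast, with a pole at each well (Dirichlet) eigenvalue, the nearest to $1+h$ being within $O(e^{-c/h})$ of it; this forces the direction of $(v_E(1),hv_E'(1))$ to make a full turn in $\mathbb{RP}^1$ as $E$ runs over a neighborhood of $1+h$, so by the intermediate value theorem there is $E_0$ with $m_{\mathrm{int}}(E_0)=m_{\mathrm{rec}}(E_0)$, and --- because of the huge slope of $m_{\mathrm{int}}$ near its pole --- this $E_0=1+h+O(e^{-c/h})$, still inside the regime of (i). Making these error terms effective is exactly the ``well in an island'' Bohr--Sommerfeld analysis of Helffer--Sj\"ostrand and G\'erard--Martinez, combined with the explicit Weber and Airy solutions above.

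Granting this, $v:=v_{E_0}$ (extended evenly) solves $(P-E_0)v=0$, lies in $C^\infty([-R,R])$, and on $[1,R]$ equals a constant multiple of $\chi_{E_0}$; by (i) that constant is $O(e^{-1/(2h)})$ relative to $\chi_{E_0}$ normalized to be $O(1)$ near the well, so by (ii)--(iii) $|v|+|hv'|\lesssim e^{-S_0/h}$ on $\{R-1<|x|<R\}$, whence $\|v\|_{H^1_h(\{R-1<|x|<R\})}\lesssim e^{-S_0/h}$. By (i) $v\sim e^{-x^2/(2h)}$ near $0$, so $\|v\|_{L^2(-3.5,3.5)}\ge\|v\|_{L^2(-\sqrt h,\sqrt h)}\gtrsim h^{1/4}$; replacing $v$ by $v/\|v\|_{L^2(-3.5,3.5)}$ normalizes the second quantity to $1$ and makes the first $\lesssim h^{-1/4}e^{-S_0/h}\le Ce^{-c/h}$ for any fixed $c<S_0$, which is the assertion.
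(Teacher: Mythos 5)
Your route is genuinely different from the paper's. You tune the \emph{energy}: shoot the even solution out of the well and choose $E_0$ by a Pr\"ufer/intermediate-value matching so that it is exactly proportional to a distinguished solution decaying across the barrier -- a direct shape-resonance quasimode construction in the spirit of Helffer--Sj\"ostrand. The paper instead tunes the \emph{domain}: it takes a genuine Dirichlet eigenfunction on $[-\widetilde R(h),\widetilde R(h)]$ (so a real $E_0=1+h+\mathcal O(e^{-1/10h})$ exists trivially by self-adjointness plus the Gaussian quasimode), and proves exterior smallness a posteriori: the Wronskian of $v$ with the exact Airy solution $w$ is exponentially small because $w$ is tiny on $(2,2.5)$ where $\|v\|_{H^1_h}\leq C$, and the only way this could fail to force $|\mathbf v(3.5)|\leq Ce^{-c/h}$ is a near-cancellation, excluded by choosing $\widetilde R(h)=R+\mathcal O(h)$ to violate the quantization condition~\eqref{e:noquant}. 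This buys two things your sketch does not: the delicate eigenvalue-matching step you flag as the main obstacle is replaced by the elementary choice of $\widetilde R(h)$, and nothing about $V$ on $(1,2)$ is ever used beyond the a priori bound $\|v\|_{H^1_h(2,2.5)}\leq C$.

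Three concrete gaps. First, at the crux you conflate the two Airy branches: the solution with the properties you actually use (decaying as $x$ increases across the barrier, log-derivative $\approx-\sqrt{V(1)-E}$ at $x=1$, amplitude $e^{-S/h}$ relative to its size at $x=1$ beyond the turning point) is the $\mathrm{Bi}$-type solution near $x\approx 3$; the $\Ai$-type solution $\Ai(h^{-2/3}(4-E_0-x))$ (the paper's $w$) decays \emph{into} the barrier from the turning point, i.e.\ grows from left to right, and matching to it is exactly the wrong condition -- it would give amplitude $e^{(S-1/2)/h}$ outside, exponentially large since, as you note, $\int_2^3\sqrt{3-x}\,dx=\tfrac23>\tfrac12$. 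Since ruling out an admixture of the dominant branch is the whole point, this must be stated correctly. Second, your barrier analysis presupposes $V>E_0$ on all of $(1,3)$; the hypotheses~\eqref{eq:potV} prescribe $V$ only on $[-1,1]\cup[2,3.5]$ and do not prevent $V<1$ on $(1,2)$, so you need either an extra assumption or an argument handling a possible intermediate well (the paper's proof is insensitive to this). Third, the existence and exponential localization of the matched $E_0$ is only asserted: the appeal to \cite{hsj}, \cite{gm} does not literally apply, since those results require analyticity/well-in-the-island hypotheses a compactly supported $V$ cannot satisfy (cf.\ Remark 2 after Theorem~\ref{t:2}); the Pr\"ufer-angle/Weyl-function argument you sketch can be completed and does give $E_0$ exponentially close to $1+h$, but carrying it out with error control is precisely the work the paper's $\widetilde R(h)$ device was designed to avoid, and your final accounting (boundary data $e^{-1/(2h)}(1,-1)(1+\mathcal O(h))$ at $x=1$) depends on that unproved sharp localization.
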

Here $H^1_h$ denotes the semiclassical Sobolev space where
in the standard definition $ D_x $ is replaced by $ h D_x $ -- 
see for instance \cite[\S 7.1, \S 8.3]{e-z}.

To derive Theorem~\ref{t:1d} from Lemma~\ref{l:main}, we take $\chi_0\in \CIc(-R,R)$
such that $\chi_0=1$ on $[-(R-1),R-1]$ and put
$$
u:=\alpha\chi_0 v,\quad
f=(P-E_0)u=\alpha[P,\chi_0]v,
$$
here the constant $\alpha=\alpha(h)$ is chosen so that
$\|\chi u\|_{L^2}=1$ and
we have $|\alpha|\leq C$.
We furthermore see that $\supp f\subset \{R-1<|x|<R\}$
and $\|f\|_{L^2}\leq e^{-c/h}$
(the constant $C$
can be absorbed into the exponential by replacing $c$ by a smaller constant
and taking $h$ small enough).

The rest of this section contains the proof of Lemma~\ref{l:main}. We take $\widetilde R(h)\geq R$,
$\widetilde R(h)=R+\mathcal O(h)$, to be chosen at the end of this
section in~\eqref{e:noquant}, and let $v$ be an eigenfunction
of $P$ on $[-\widetilde R(h),\widetilde R(h)]$ with Dirichlet boundary conditions with
eigenvalue $E_0$ close to the ground state $1+h$ of the quantum harmonic oscillator
$-h^2\partial_x^2+x^2+1$. The existence of such eigenvalue is given by the following
\begin{lemm}
  \label{e:qm-1}
For $h$ small enough and given $\widetilde R(h)\in [R,R+1]$, there exists $E_0\in\mathbb R$ and $v\in C^\infty([-\widetilde R(h),
\widetilde R(h)])$
such that
$$
\begin{gathered}
(P-E_0)v=0,\quad
v(\widetilde R(h))=v(-\widetilde R(h))=0;\\
\|v\|_{L^2(-\widetilde R(h),\widetilde R(h))}=1,\quad
\|v\|_{H^1_h(-\widetilde R(h),\widetilde R(h))}\leq C,\quad
E_0=1+h+\mathcal O(e^{-{1\over 10h}}).
\end{gathered}
$$
\end{lemm}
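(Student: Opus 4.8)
The plan is to construct $E_0$ and $v$ by comparing the Dirichlet problem on the large interval $[-\widetilde R(h),\widetilde R(h)]$ with the Dirichlet problem for the quantum harmonic oscillator $P_0=-h^2\partial_x^2+x^2+1$, whose ground state eigenvalue is exactly $1+h$ with normalized eigenfunction $\phi_0(x)=(\pi h)^{-1/4}e^{-x^2/(2h)}$. The key point is that on $[-1,1]$ the potential $V$ agrees with $x^2+1$, and the ground state $\phi_0$ is concentrated near $x=0$ with Gaussian decay, so outside a small neighborhood of $0$ it is already $\mathcal O(e^{-1/(10h)})$ together with all $h$-derivatives (after rescaling). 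Therefore $\phi_0$, cut off smoothly inside $[-1,1]$, is an $\mathcal O(e^{-c/h})$ quasimode for $P$ on the large interval with Dirichlet boundary conditions: $(P-(1+h))(\theta\phi_0)=[P,\theta]\phi_0$ is supported where $\phi_0$ is exponentially small, and $\theta\phi_0$ vanishes near $\pm\widetilde R(h)$.

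From such a quasimode one obtains a genuine eigenvalue by the spectral theorem: since $P$ restricted to $[-\widetilde R(h),\widetilde R(h)]$ with Dirichlet conditions is self-adjoint with discrete spectrum, the existence of a normalized $w$ with $\|(P-(1+h))w\|_{L^2}\leq Ce^{-1/(10h)}$ (here $w=\theta\phi_0/\|\theta\phi_0\|$, and $\|\theta\phi_0\|=1+\mathcal O(e^{-c/h})$) forces an eigenvalue $E_0$ of $P$ within distance $Ce^{-1/(10h)}$ of $1+h$; I take $v$ to be a corresponding $L^2$-normalized eigenfunction. The $H^1_h$ bound $\|v\|_{H^1_h}\leq C$ is then automatic: integrating $(P-E_0)v=0$ against $\bar v$ gives $\|hD_xv\|_{L^2}^2=\int(E_0-V)|v|^2\leq E_0\|v\|_{L^2}^2$, which is $\mathcal O(1)$ since $E_0=1+\mathcal O(h)$ and $V\geq 0$. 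One should note that the harmonic oscillator has infinitely many eigenvalues within $o(1)$ of $1+h$? No --- its eigenvalues are $(2k+1)h+1$, so the nearest others are at distance $2h$; a short separation-of-eigenvalues argument (or Temple's inequality) upgrades the quasimode estimate to pin $E_0$ to the ground state branch with exponentially small error, giving $E_0=1+h+\mathcal O(e^{-1/(10h)})$.

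The main subtlety, and the reason the statement is phrased with the parameter $\widetilde R(h)$ free in $[R,R+1]$, is that for a fixed $\widetilde R$ one cannot guarantee the eigenvalue lands exactly on a preassigned value; the later equation~\eqref{e:noquant} will choose $\widetilde R(h)$ so that an additional quantization/matching condition at the boundary holds. For the present lemma, however, nothing needs to be chosen: for \emph{every} $\widetilde R(h)\in[R,R+1]$ the above construction produces the desired $E_0$ and $v$, with constants $C$ uniform in $\widetilde R(h)$ because the quasimode $\theta\phi_0$ and the estimate $\|(P-(1+h))(\theta\phi_0)\|\leq Ce^{-1/(10h)}$ depend only on the behavior of $V$ near $0$ and not on the right endpoint. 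I expect the only thing requiring care is making the "quasimode implies eigenvalue, with the right branch and error" step rigorous: one writes $w=\sum_j c_j e_j$ in the Dirichlet eigenbasis, notes $\sum_j|c_j|^2|E_j-(1+h)|^2\leq C^2e^{-1/(5h)}$, and concludes that the spectral mass of $w$ outside $(1+h-\delta,1+h+\delta)$ is $\mathcal O(e^{-c/h})$ for $\delta=h/2$ say; since that window contains exactly one Dirichlet eigenvalue of $P$ for $h$ small (by comparison with $P_0$, whose spectrum near $1+h$ is a single simple eigenvalue, plus a perturbative argument for the exterior region contributing only exponentially small corrections), that eigenvalue is the desired $E_0$ and its eigenfunction the desired $v$.
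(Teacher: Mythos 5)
Your proposal is correct and follows essentially the same route as the paper: cut off the harmonic oscillator ground state inside $[-1,1]$ to obtain an $\mathcal O(e^{-1/(10h)})$ quasimode satisfying the Dirichlet conditions, use self-adjointness of the Dirichlet realization of $P$ to produce an eigenvalue $E_0=1+h+\mathcal O(e^{-1/(10h)})$, take $v$ a normalized eigenfunction, and get the $H^1_h$ bound by pairing $(P-E_0)v=0$ with $\bar v$ and integrating by parts. The additional ``branch-pinning'' discussion (Temple's inequality, counting Dirichlet eigenvalues in a window of size $h/2$) is unnecessary, since the lemma only asserts existence of \emph{some} eigenvalue exponentially close to $1+h$, which the quasimode estimate already yields.
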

\begin{proof}
Define
$$
v_1(x):=h^{-1/4}e^{-{x^2\over 2h}},\quad
x\in [-1,1].
$$
Note that, since $P=-h^2\partial_x^2+x^2+1$ on $[-1,1]$, we have
$$
\begin{gathered}
(P-(1+h))v_1=0,\quad x\in [-1,1];\\
\|v_1\|_{L^2(-1/2,1/2)}\geq C^{-1},\quad
\|v_1\|_{H^1_h(\{1/2<|x|<1\})}\leq Ce^{-{1\over 10h}}.
\end{gathered}
$$
Now, take $\widetilde\chi\in \CIc(-1,1)$ such that $\widetilde\chi=1$ on $[-1/2,1/2]$.
Then
$$
\|\widetilde\chi v_1\|_{L^2}\geq C^{-1},\quad
\|(P-(1+h))\widetilde\chi v_1\|_{L^2}\leq Ce^{-{1\over 10h}},
$$
and
$\widetilde \chi v_1$ satisfies the Dirichlet boundary conditions at $\pm\widetilde R(h)$
(since it vanishes there).
Now, $P-(1+h)$ is self-adjoint on
$L^2([-\widetilde R(h),\widetilde R(h)])$ when Dirichlet boundary conditions are imposed.
Since the norm of its inverse is at least $C^{-1}e^{1\over 10h}$, we see that this
operator has an eigenvalue which is $\mathcal O(e^{-{1\over 10h}})$;
we denote the corresponding eigenvalue of $P$ by $E_0$ and the corresponding $L^2$
normalized eigenfunction by $v$.
Finally, to establish a bound on the $H^1_h$ norm of $v$ it suffices to multiply the
equation $(P-E_0)v=0$ by $\overline v$ and integrate by parts.
\end{proof}
Lemma~\ref{l:main} follows once we establish the following exponential bound on $v$:
\begin{equation}
  \label{e:key}
\|v\|_{H^1_h(\{3.5\leq |x|\leq \widetilde R(h)\})}\leq Ce^{-c/h}.
\end{equation}
We will show~\eqref{e:key} for positive $x$; the case of negative $x$ is handled similarly
(since $V$ is even, $v$ can be taken to be even as well).
The main idea is the following: if $v$ is not exponentially large in $1/h$
near, say, $x=2$ relative to its size on $[3.5,\widetilde R(h)]$, then
one expects $v$ to give an approximate Dirichlet eigenfunction
to the operator $P$ on $[2,\widetilde R(h)]$ with eigenvalue $E_0$.
However, then $E_0$ has to satisfy a quantization condition determined by
the behavior of $V$ on $[2,\widetilde R(h)]$; since $E_0=1+h+o(h)$,
one can choose $\widetilde R(h)$ to ensure that the quantization condition is not satisfied
and thus obtain a contradiction.

For $f_1,f_2\in C^\infty(\mathbb R)$ we define the (semiclassical)
Wronskian by
$$
W(f_1,f_2)=f_1\cdot h\partial_x f_2-f_2\cdot h\partial_x f_1,
$$
and note that
$$
h\partial_x W(f_1,f_2)=f_2\cdot (P-E_0)f_1-f_1\cdot (P-E_0)f_2.
$$
The interval $[2,R]$ can be split into three regions where the behavior of $v$ is different,
based on the sign of $V(x)-1$: the ``elliptic'' or classically forbidden region $[2,3)$,
where $v$ will grow exponentially in $h$ as $x$ decreases, the neighborhood of the turning
point $x=3$, and the ``hyperbolic'' region $(3,R)$, where the equation $(P(h)-E_0)v=0$ has
two solutions which are bounded as $h\to 0$.

We start with the hyperbolic region, considering the phase function
$$
\Phi(x):=\int_{4-E_0}^x \sqrt{E_0-V(y)}\,dy.
$$
Note that $\Phi$ is well-defined on $x\in [4-E_0,R+1]$, since
$\sqrt{E_0-V(y)}=\sqrt{y-(4-E_0)}$ for $y\in [4-E_0,3.5]$; in fact,
we have
\begin{equation}
  \label{e:phase-rel}
\Phi(x)={2\over 3}(x-(4-E_0))^{3/2}\quad\text{for }x\in [4-E_0,3.5].
\end{equation}
Define now the following \emph{WKB solutions}:
$$
v_\pm(x):=(E_0-V(x))^{-1/4}e^{\pm {i\Phi(x)\over h}},\quad
x\in [3.5, R+1],
$$
then we have uniformly in $x\in [3.5, R+1]$,
\begin{equation}
  \label{e:wkb-solve}
(P-E_0)v_\pm(x)=\mathcal O(h^2),\quad
W(v_+,v_-)(x)=-2i+\mathcal O(h).
\end{equation}
Denote
$$
\mathbf v(x)=(\mathbf v_1(x),\mathbf v_2(x)):=(W(v,v_+)(x),W(v,v_-)(x)),
$$
then
\begin{gather}
  \label{e:W-recovery}
v(x)={\mathbf v_2(x) \cdot v_+(x)-\mathbf v_1(x)\cdot v_-(x)\over W(v_+,v_-)(x)},\\
  \label{e:W-recovery-2}
h\partial_x v(x)={\mathbf v_2(x)\cdot h\partial_x v_+(x)-\mathbf v_1(x)\cdot h\partial_x v_-(x)\over W(v_+,v_-)(x)}.
\end{gather}
Since $(P-E_0)v=0$, we have
$$
h\partial_x W(v,v_\pm)=-v\cdot (P-E_0)v_\pm\,.
$$
From~\eqref{e:wkb-solve} and~\eqref{e:W-recovery} we see that for $x\in [3.5,R+1]$,
$$
|\partial_x \mathbf v(x)|\leq Ch|\mathbf v(x)|.
$$
Therefore,
\begin{equation}
  \label{e:wkb-result}
\mathbf v(x)=\mathbf v(3.5)(1+\mathcal O(h)),\quad
x\in [3.5,\widetilde R(h)].
\end{equation}
This and~\eqref{e:W-recovery}, \eqref{e:W-recovery-2} show that
\begin{equation}
  \label{e:W-bound}
\|v\|_{H^1_h(3.5,\widetilde R(h))}\leq C|\mathbf v(3.5)|.
\end{equation}
The final component of the proof is the following solution in the region $[2,3.5]$
which describes the transformation from the hyperbolic to the elliptic region via the
turning point, and is exponentially decaying in
the elliptic region. Since $V(x)=4-x$ in this region, the solution is given by an Airy function,
and its properties are as follows:
\begin{lemm}
  \label{l:airy}
There exists a solution $w(x)$ to the equation $(P-E_0)w=0$ for $x\in [2,3.5]$
such that $\|w\|_{H^1_h(2,2.5)}\leq Ce^{-c/h}$ for some constants $C,c>0$ and 
\begin{equation}
  \label{e:airy}
\begin{pmatrix}
w(x)\\
h\partial_x w(x)
\end{pmatrix}
= e^{i\pi \over 4}
\begin{pmatrix}
v_+(x)\\
h\partial_x v_+(x)
\end{pmatrix}
- e^{-{i\pi\over 4}}
\begin{pmatrix}
v_-(x)\\
h\partial_x v_-(x)
\end{pmatrix}+O(h),
\quad
x\in [3.25, 3.5].
\end{equation}
\end{lemm}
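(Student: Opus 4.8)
The plan is to work in the region $[2,3.5]$ where $V(x)=4-x$ exactly, so that the equation $(P-E_0)w=0$ becomes $-h^2 w'' + (4-x-E_0)w = 0$. Substituting $t = h^{-2/3}(x-(4-E_0))$ turns this into the Airy equation $\partial_t^2 W = tW$, so every solution is a linear combination of $\Ai(t)$ and $\Bi(t)$. The turning point $x=3$ corresponds to $t=0$ (up to $\mathcal O(h)$ since $E_0 = 1+\mathcal O(h)$), the elliptic region $[2,3)$ corresponds to $t<0$ scaled by $h^{-2/3}$, and the hyperbolic region $(3,3.5]$ to large positive $t$. First I would \emph{define} $w$ to be exactly the Airy solution $w(x) := c(h)\,\Ai\big(h^{-2/3}(x-(4-E_0))\big)$, with a normalizing constant $c(h)$ of size $h^{-1/6}$ chosen so that the prefactors match those in \eqref{e:airy}; this is the solution that decays in the classically forbidden direction.

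Next I would establish the two claimed properties. For the bound $\|w\|_{H^1_h(2,2.5)}\leq Ce^{-c/h}$: on $[2,2.5]$ the argument of $\Ai$ is $h^{-2/3}(x-(4-E_0)) \leq h^{-2/3}(2.5-3+\mathcal O(h)) \leq -\tfrac12 h^{-2/3}$ for $h$ small, i.e.\ a large negative argument. The classical asymptotics $\Ai(-s) = \mathcal O(s^{-1/4})$ for $s\to+\infty$ would only give polynomial decay, which is \emph{not} enough, so the genuine point is that we need exponential smallness. This forces using instead $\Ai$ evaluated where its argument is large \emph{positive} — that is, the decaying direction must be toward $x=2$, meaning $V-E_0 > 0$ there, which indeed holds since $V(2)=2 > E_0 \approx 1$. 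So the argument $h^{-2/3}(x-(4-E_0))$ at $x=2$ is $h^{-2/3}(2-3+\mathcal O(h)) = -h^{-2/3}+\mathcal O(h^{1/3})$, negative — this means I should instead take $w$ proportional to $\Ai$ of the \emph{reflected} variable so that the forbidden region $x<3$ sits at positive Airy argument; equivalently, use the standard connection formula for $\Ai$ across a turning point where the potential increases to the left. With the correct orientation, $\Ai(h^{-2/3}(3-x)(1+\mathcal O(h)))$ on $[2,2.5]$ has argument $\geq \tfrac12 h^{-2/3}$, and $\Ai(s) \sim \tfrac{1}{2\sqrt\pi}s^{-1/4}e^{-\frac23 s^{3/2}}$ gives $\|w\|_{H^1_h(2,2.5)} = \mathcal O(e^{-c/h})$ with $c = \tfrac23(\tfrac12)^{3/2} - \epsilon$; the $h\partial_x$ derivative contributes a factor $h^{1/3}$ and the same exponential.

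For the matching formula \eqref{e:airy} on $[3.25,3.5]$: here the Airy argument is large positive (oscillatory regime), $h^{-2/3}(x-(4-E_0)) \geq h^{-2/3}(0.25+\mathcal O(h)) \geq \tfrac15 h^{-2/3}$, and I would apply the oscillatory asymptotic expansion
$$
\Ai(-s) = \pi^{-1/2}s^{-1/4}\Big(\cos\big(\tfrac23 s^{3/2}-\tfrac{\pi}{4}\big) + \mathcal O(s^{-3/2})\Big),
$$
together with the corresponding formula for $\Ai'(-s)$, expressing the cosine as a sum of $e^{\pm i(\frac23 s^{3/2}-\pi/4)}$. Using \eqref{e:phase-rel}, which says $\Phi(x) = \tfrac23(x-(4-E_0))^{3/2}$ precisely on $[3.5-\text{neighborhood}]$ (in particular on $[3.25,3.5]$, since all of $[2,3.5]$ lies in the linear-$V$ region), one identifies $\tfrac23 s^{3/2} = \Phi(x)/h$ and $s^{-1/4}h^{-1/6} = (E_0-V(x))^{-1/4}(1+\mathcal O(h))$, so that $\pi^{-1/2}s^{-1/4}e^{\pm i(\frac23 s^{3/2}-\pi/4)}$ becomes (after absorbing constants into $c(h)$) exactly $e^{\mp i\pi/4}v_\mp(x)$, up to the $\mathcal O(s^{-3/2}) = \mathcal O(h)$ error from the Airy expansion and the $\mathcal O(h)$ error in $(E_0-V)^{-1/4}$ versus the exact Airy prefactor. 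Choosing $c(h)$ to fix the overall constant yields \eqref{e:airy}; the derivative row follows the same way from the $\Ai'$ asymptotics, with the $\Ai'$ expansion producing the matching $h\partial_x v_\pm$ terms since $h\partial_x v_\pm = \pm i(E_0-V)^{1/4}e^{\pm i\Phi/h}(1+\mathcal O(h))$ matches $\mp\sqrt{s}\cdot(\text{sine asymptotics of }\Ai')$.

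The main obstacle I expect is \textbf{bookkeeping the constants and the $\mathcal O(h)$ uniformity}: one must check that the rescaling $t = h^{-2/3}(x-(4-E_0))$, the $h$-dependent $E_0$, and the choice of the normalizing constant $c(h)$ conspire so that the Airy prefactor $h^{-1/6}\pi^{-1/2}s^{-1/4}$ agrees with $(E_0-V(x))^{-1/4}$ up to a \emph{multiplicative} $1+\mathcal O(h)$ (not just $\mathcal O(h^{1/3})$) uniformly on $[3.25,3.5]$ — this works because $s$ is of size $h^{-2/3}$ there, so $s^{-3/2}$ and similar errors are genuinely $\mathcal O(h)$ — and that the phase $\tfrac23 s^{3/2}$ equals $\Phi(x)/h$ with \emph{no} error, which is the exact content of \eqref{e:phase-rel} and is why the lemma is stated on the interval $[3.25,3.5]$ where $V$ is still linear. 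Everything else is the classical Airy connection formula.
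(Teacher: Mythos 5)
Your proposal is correct and follows essentially the same route as the paper: after self-correcting the orientation, your $w$ is exactly the paper's choice $w(x)=2i\sqrt{\pi}\,h^{-1/6}\Ai\big(h^{-2/3}(4-E_0-x)\big)$, with the bound on $[2,2.5]$ from the decaying Airy asymptotics and \eqref{e:airy} from the oscillatory asymptotics for $\Ai$ and $\Ai'$ combined with \eqref{e:phase-rel}. The only point to tighten is to take the Airy argument to be exactly $h^{-2/3}(4-E_0-x)$ (rather than $h^{-2/3}(3-x)(1+\mathcal O(h))$) so that $(P-E_0)w=0$ holds exactly, as required.
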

\begin{proof} The solution $w$ is given by
$$
w(x)=2i \sqrt\pi h^{-1/6} \Ai(h^{-2/3}(4-E_0-x)),
$$
and its properties follow from the following asymptotic formul\ae{} for the Airy function $\Ai$
as $y\to +\infty$:
$$
\begin{aligned}
\Ai(y)={y^{-1/4}\over 2\sqrt \pi} \exp\Big(-{2\over 3}y^{3/2}\Big) (1+\mathcal O(y^{-3/2})),\\
\Ai(-y)={y^{-1/4}\over \sqrt\pi} \bigg(\sin\Big({2\over 3}y^{3/2}+{\pi\over 4}\Big)+\mathcal O(y^{-3/2})\bigg),
\end{aligned}
$$
and similar formul\ae{} for its derivatives,
see for example~\cite[(7.6.20), (7.6.21)]{ho1}.
\end{proof}
We are now ready to finish the proof of~\eqref{e:key}. Since
$(P-E_0)v=(P-E_0)w=0$ on $[2,3.5]$, the Wronskian
$W(v,w)$ is constant on this interval.
Using the estimates $\|v\|_{H^1_h(2,2.5)}\leq C$,
$\|w\|_{H^1_h(2,2.5)}\leq Ce^{-c/h}$, we see that
$$
|W(v,w)|\leq Ce^{-c/h}.
$$
Now, computing the same Wronskian at $x=3.5$ and using~\eqref{e:airy}, we get
$$
W(v,w)=e^{i\pi\over 4}\mathbf v_1(3.5)-e^{-i\pi\over 4}\mathbf v_2(3.5)+\mathcal O(h)|\mathbf v(3.5)|,
$$
It remains to prove that, for a certain choice of $\widetilde R(h)$ independent of $E_0$, we have
\begin{equation}
  \label{e:funal}
|e^{i\pi\over 4}\mathbf v_1(3.5)-e^{-i\pi\over 4}\mathbf v_2(3.5)|\geq C^{-1}|\mathbf v(3.5)|.
\end{equation}
Indeed, in this case $|\mathbf v(3.5)|\leq Ce^{-c/h}$, which together with~\eqref{e:W-bound}
gives~\eqref{e:key}.

Using~\eqref{e:wkb-result}, we rewrite~\eqref{e:funal} as follows:
\begin{equation}
  \label{e:funal2}
|e^{i\pi\over 4}\mathbf v_1(\widetilde R(h))-e^{-i\pi\over 4}\mathbf v_2(\widetilde R(h))|\geq C^{-1}|\mathbf v(\widetilde R(h))|.
\end{equation}
Since $v$ satisfies the Dirichlet boundary condition at $\widetilde R(h)$, we have
$$
W(v,v_\pm)(\widetilde R(h))=-E_0^{-1/4} e^{\pm {i\Phi(\widetilde R(h))\over h}}\cdot h\partial_x v(\widetilde R(h)),
$$
so that $|\mathbf v_1(\widetilde R(h))|,|\mathbf v_2(\widetilde R(h))|\geq C^{-1}|\mathbf v(\widetilde R(h))|$ and
$$
{e^{-{i\pi\over 4}}\mathbf v_2(\widetilde R(h))\over e^{i\pi\over 4}\mathbf v_1(\widetilde R(h))}=\exp\Big(-{i\over h}\big(2\Phi(\widetilde R(h))+\pi h/2\big)\Big).
$$
To prove~\eqref{e:funal2}, we choose $\widetilde R(h)=R+\mathcal O(h)$, $\widetilde R(h)\geq R$, so that
\begin{equation}
  \label{e:noquant}
\min_{j\in\mathbb Z}\big|\Phi(\widetilde R(h))+(j+1/4)\pi h\big|\geq {\pi h\over 4};
\end{equation}
this can be done independently of $E_0$, since $E_0=1+h+\mathcal O(e^{-{1\over 10h}})$ and thus
$$
\Phi(\widetilde R(h))=\int_{3-h}^5 \sqrt{1+h-V(y)}\,dy+\sqrt{1+h}(\widetilde R(h)-5)+\mathcal O(e^{-{1\over 10h}}).
$$

\section{A general argument}
\label{s:general}

Suppose that $ P ( h) $ is an operator satisfying the general
assumptions of \cite{nsz}, that is a black box self-adjoint 
operator, 
close to the Laplacian and having analytic coefficients near
infinity and with a barrier at energy $ V_0 $. 
 (A barrier separates the
interaction region from infinity -- see~\eqref{eq:barriera} and Fig.~\ref{f1}).
We assume that $ \RR^n \setminus B ( 0, R_0 ) $ is contained
in the ``outside'' of the black box and the trapped set at energy $ V_0 $.
We also assume the Hilbert space on which the operator acts, 
$  \mathcal H  = \mathcal H_{R_0} \oplus L^2 ( \RR^n \setminus 
B ( 0 , R_0 ) $, 
is equipped with an involution $ u \mapsto \bar u $ equal to 
complex conjugation on $ L^2 ( \RR^n \setminus B ( 0 , R_0 ) ) $ and
satisfying  $ \overline { z u } = \bar z \bar u $, $ z \in \CC $. The
abstract reality assumption on $ P ( h ) $ reads
$ \overline {P ( h ) u} = P ( h ) \overline u $. 

An example to keep in mind is given by the operator
\[ P ( h ) = - h^2 \Delta_g + V : H^2 ( \RR^n ) \to L^2 ( \RR^n ) , 
\]
where the measure on $ L^2 $ is obtained from the Riemannian 
metric and $ \mathcal H := L^2 ( \RR^n ) $. 
The potential $ V ( x ) $ and  the metric coefficients $ g^{ij}
( x )  $ are smooth, extend analytically to $ \Omega := \{ z \in \CC^n
: | z | \geq R_1 , \ \ | \Im z | \leq \delta | z| \}$, and 
\begin{equation}
\label{eq:gdv}    g^{ij} ( z ) - \delta^{ij} \rightarrow 0 , \ \  V ( z )
\rightarrow 0 , \ \ | z | \to \infty , z \in \Omega . \end{equation}
The trapped set,  $ K_E $,  at energy $ E > 0  $ is then defined by 
\begin{equation}
\label{eq:KE}  K_E :=  \{ ( x , \xi ) \in \RR^n \times \RR^n : p ( x, \xi ) = E, \ \ e^{ tH_p } ( x, \xi ) \not
\to \infty  , \ \ t \to \pm \infty \}
, \end{equation} 
where 
$ H_p := \sum_{ j=1}^n \partial_{\xi_j} p\cdot \partial_{x_j }
- \partial_{x_j} p\cdot \partial_{ \xi_j }$ and 
$  p( x, \xi) := \sum_{
  i,j=1}^n g^{ij} ( x ) \xi_j \xi_i + V ( x ) $.

The assumption on the interaction region in this case means that $ \pi ( K_{V_0}  ) \subset B (
0 , R_0 ) $. The barrier assumption means that
\begin{equation}
\label{eq:barriera}
p^{-1} ( V_0 ) = K_{V_0} \cup \Sigma_{V_0 } , \ \ \Sigma_{V_0 }  \cap
K_{V_0} = \emptyset , \ \ \Sigma_{V_0 } \text{ is closed.}
\end{equation}

\begin{figure}
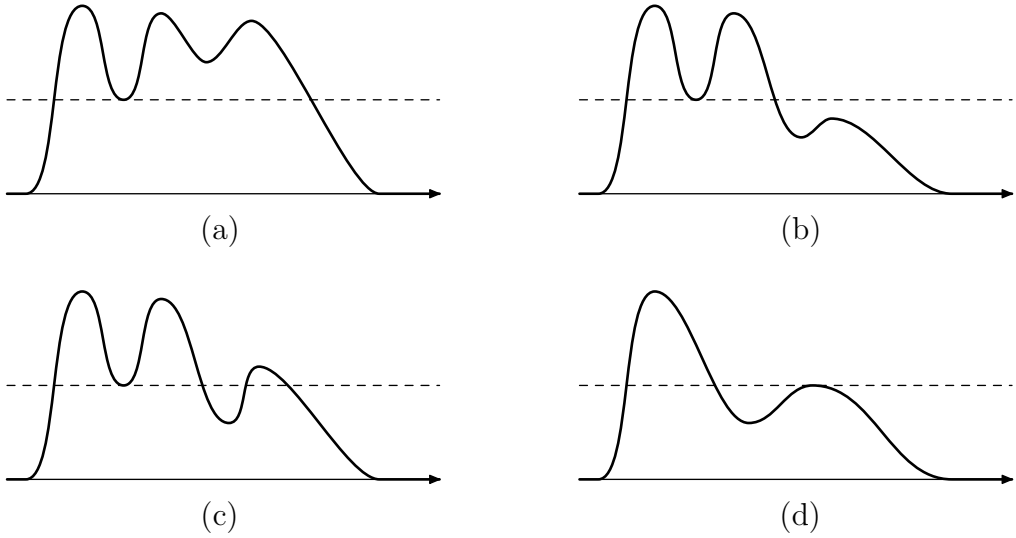

\includegraphics{resolve.1}
\qquad\qquad
\includegraphics{resolve.2}
\hbox to\hsize{\hss (a)\hss\hss(b)\hss}
\vskip.2in
\includegraphics{resolve.3}
\qquad\qquad
\includegraphics{resolve.4}
\hbox to\hsize{\hss (c)\hss\hss(d)\hss}
\caption{Examples of one-dimensional potentials satisfying: (a) condition~\eqref{e:as-intro}
and hence~\eqref{eq:barriera} and~\eqref{eq:KV0};
(b) conditions~\eqref{eq:barriera} and~\eqref{eq:KV0}, but not~\eqref{e:as-intro};
(c) condition~\eqref{eq:barriera}, but not~\eqref{eq:KV0};
(d) neither~\eqref{eq:barriera} nor~\eqref{eq:KV0}.
The dashed line corresponds to $V_0$. In particular, examples (a) and (b) satisfy the assumptions of
Theorems \ref{t:1} and \ref{t:2}.}
\label{f1}
\end{figure}

The more general black box setting allows obstacle problems and other
geometric situations. However, the barrier assumption cannot be satisfied
for connected manifolds without having a nontrivial
potential $ V $. 

We denote by $\Res(P(h))$ the set of resonances of $P(h)$ (in an $h$-independent neighborhood of
$\{\Re z>0,\Im z>0\}$).

\begin{thm}
\label{t:1} 
Let $ P ( h ) $ satisfy the general assumptions above. 
Suppose that $ z_0 = z_0 (h) \in \Res ( P ( h)) $,  $ \Re z_0  = V_0  +
\mathcal O ( h ) $,  $ z_0 $
is simple, and 
\begin{equation}
\label{eq:assz0} 
 | \Im z_0 | = {\mathcal O} ( h^\infty ) , \ \ \ 
  d ( z_0 ( h ) , \Res ( P ( h ) ) \setminus \{z_0 ( h ) \} ) > h^N
, \end{equation}
for 
some $ N $.  
Suppose that $ \chi $ and $ \psi $ are given by \eqref{eq:cutoffs}
with $ R > R_0 $. 
Then there exist $ C_0 > 0 $ and $ h_0
$ such that for $ 0 < h < h_0 $, 
\begin{equation}
\label{eq:res1}
\|  \chi ( P ( h ) - \Re z_0 - i 0 )^{-1} \chi \|_{{\mathcal H} \to {\mathcal H} }
\geq \frac{ 1 }{ C_0 | \Im z_0 | } , 
\end{equation}
and
\begin{equation}
\label{eq:res2}
\|  \psi ( P ( h ) - \Re z_0 - i 0 )^{-1} \chi \|_{{\mathcal H} \to {\mathcal H} }
\geq \frac{ 1 }{ C_0 \sqrt{ | \Im z_0 | h } }.
\end{equation}
\end{thm}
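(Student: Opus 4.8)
The plan is to read off both bounds from the pole of the meromorphically continued cut-off resolvent at the simple resonance $z_0$. Let $u=u(h)$ be the outgoing resonant state, $(P(h)-z_0)u=0$, normalized so that $\int u^2=1$ along the complex-scaled contour; the reality assumption $\overline{P(h)u}=P(h)\bar u$ is what makes $\bar u$ the resonant state of the scaled adjoint at $\bar z_0$, so that the residue of $R_h$ at $z_0$ is the rank one operator with integral kernel $u(x)u(y)$. Cutting off, for $z$ near $z_0$,
\[
\chi R_h(z)\chi=\frac{\Pi_\chi}{z_0-z}+A(z),\qquad \psi R_h(z)\chi=\frac{\Pi_\psi}{z_0-z}+B(z),
\]
where $\Pi_\chi f=\langle f,\overline{\chi u}\rangle\,\chi u$ and $\Pi_\psi f=\langle f,\overline{\chi u}\rangle\,\psi u$ ($\langle\cdot,\cdot\rangle$ the $L^2$ inner product), so $\|\Pi_\chi\|=\|\chi u\|^2$ and $\|\Pi_\psi\|=\|\chi u\|\,\|\psi u\|$, and $A,B$ are holomorphic on the disc $\{|z-z_0|<h^N\}$ — here the separation bound in \eqref{eq:assz0} is exactly what keeps this disc free of other poles. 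Evaluating at $z=\Re z_0-i0$, where $|z_0-z|=|\Im z_0|$, the theorem reduces to three facts: (i) $\|A\|,\|B\|=\mathcal O(h^{-M})$ for a fixed $M$; (ii) $\|\chi u\|\ge C^{-1}$; (iii) $\|\psi u\|\ge C^{-1}\sqrt{|\Im z_0|/h}$. Since $|\Im z_0|=\mathcal O(h^\infty)$, both $1/|\Im z_0|$ and $1/\sqrt{|\Im z_0|h}$ dominate every power of $h^{-1}$, so the regular parts are negligible and \eqref{eq:res1}--\eqref{eq:res2} follow; the $+i0$ versions then come from $R_h(E+i0)=R_h(E-i0)^*$ together with the reality assumption (replacing $u$ by $\bar u$).

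For (i) I would write $A=\chi R_h\chi-(z_0-z)^{-1}\Pi_\chi$, holomorphic on $\{|z-z_0|<h^N\}$, and use the maximum principle: $\sup_{|z-z_0|\le h^N/2}\|A\|\le\sup_{|z-z_0|=h^N/2}\|\chi R_h(z)\chi\|+2h^{-N}\|\Pi_\chi\|$. On that circle $z$ stays at distance $\ge h^N/2$ from all resonances, with $\Re z=V_0+\mathcal O(h)$ and $\Im z=\mathcal O(h^N)$, so the cut-off resolvent is polynomially bounded there: in the barrier setting this is the content of the resolvent estimates of Nakamura--Stefanov--Zworski \cite{nsz} (see also \cite{B,v,D,tz}), which show that away from the discrete family of resonances near $V_0$ the flow is effectively non-trapping, giving $\|\chi R_h(z)\chi\|=\mathcal O(h^{-M_0})$; the same applies to $B$. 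I expect this to be the main obstacle — one really needs a \emph{polynomial} a priori bound away from resonances, since an $e^{C/h}$ bound would not absorb an arbitrary $\mathcal O(h^\infty)$ width.

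For (ii) I would invoke that the barrier assumption \eqref{eq:barriera} together with $\pi(K_{V_0})\subset B(0,R_0)$ forces the resonant state to concentrate in the trapped set: Agmon/Helffer--Sj\"ostrand-type exponential decay estimates (\cite{hsj,gm,bm}) give that the $L^2$ mass of $u$ outside $B(0,R_0)$ and along the scaled contour is $\mathcal O(e^{-c/h})$ relative to $\|\chi u\|$, and $u$ is nearly real on $B(0,R_0)$ (the well operator being real and $z_0$ nearly real). With $\int u^2=1$ this yields $\|\chi u\|^2=1+\mathcal O(e^{-c/h})\ge C^{-1}$, and simultaneously confirms that the constant $\int u^2$ normalizing $u$ is comparable to $\|\chi u\|^2$, as used above for $\|\Pi_\chi\|,\|\Pi_\psi\|$.

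For (iii) — the genuinely new input — I would apply Green's formula to $(P(h)-z_0)u=0$ on $B(0,\rho)$ with $\rho$ ranging over a subinterval $I\subset(R-1,R+1)$ with $\inf I>R_0$, a region where $P(h)$ is an honest second order operator with $V=0$; this is the identity expressing $\Im z_0$ as minus the outgoing flux of $u$,
\[
|\Im z_0|\,\|u\|_{L^2(B(0,\rho))}^2=h\,\Bigl|\Im\!\int_{\partial B(0,\rho)}\bar u\,(h\partial_\nu u)\,dS\Bigr|,
\]
and for the general black box it is the standard relation between $\Im z_0$ and the flux through $\partial B(0,\rho)$. Since $\inf I>R_0$ we have $\|u\|_{L^2(B(0,\rho))}\ge\|\chi u\|$; integrating over $\rho\in I$ and applying Cauchy--Schwarz in the surface variable and in $\rho$ bounds the right side by $h\,\|u\|_{L^2(\Sigma_I)}\,\|h\partial u\|_{L^2(\Sigma_I)}$ with $\Sigma_I=\{|x|\in I\}$, which a semiclassical elliptic estimate turns into $\lesssim h\,\|u\|^2_{L^2(\Sigma_I')}$ for a slightly thicker shell $\Sigma_I'$; choosing $I$ and $\Sigma_I'$ inside $\{R-1<|x|<R+1\}=\supp\psi$ (possible since $R>R_0$) gives $|\Im z_0|\,\|\chi u\|^2\le Ch\,\|\psi u\|_{L^2}^2$, which is (iii). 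Assembling the pieces, $\|\chi R_h(\Re z_0-i0)\chi\|\ge\|\Pi_\chi\|/|\Im z_0|-\|A\|\ge(C|\Im z_0|)^{-1}$ and $\|\psi R_h(\Re z_0-i0)\chi\|\ge\|\Pi_\psi\|/|\Im z_0|-\|B\|\ge(C\sqrt{|\Im z_0|\,h})^{-1}$, which are \eqref{eq:res1} and \eqref{eq:res2}.
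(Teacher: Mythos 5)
Your proposal is correct and follows essentially the same route as the paper: the simple-pole decomposition of the cut-off resolvent, a polynomial bound on the holomorphic remainder via the maximum principle and the Tang--Zworski/Nakamura--Stefanov--Zworski resolvent estimates, the fact that $\|\chi u\|=1+\mathcal O(h^\infty)$ from the microlocalization of resonant states, and the Green/flux identity on spheres $\partial B(0,R+t)$ integrated in the radius (with Cauchy--Schwarz and an elliptic estimate) to get $\|\psi u\|\geq C^{-1}\sqrt{|\Im z_0|/h}$. The only differences are cosmetic: you spell out the easier bound \eqref{eq:res1} (which the paper omits) and use the $\int u^2=1$ normalization rather than normalizing the resonant state directly.
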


\noindent\textbf{Remark.}
As stated in the introduction, it is \eqref{eq:res2} that seems to be the novel aspect. The presence
of the square root is (morally) consistent with the results of
\cite[Lemma A.2]{Bz} and of \cite[Theorem 2]{da-va}.

\begin{proof}
We only present the proof of \eqref{eq:res2}.
Using the involution $ u \mapsto \bar u $ we define $ (  u
\otimes v ) f  := u \langle f , \bar v \rangle $, where we use the
inner product on the black box Hilbert space. 
Since $ z_0 $ is simple, we have
\[  \psi ( P ( h ) - z )^{-1} \chi = \frac{ \psi u \otimes \chi u } {
  z - z_0 } + \psi R_{z_0} ( z , h ) \chi , \]
where $u$ is the corresponding normalized resonant state and $ R_{z_0} ( z , h ) $  is holomorphic in
\[  [ \Re z_0 - h^N   , \Re z_0 + h^N ] + i
( - h^N , \infty ) .\]

From \cite[(5.1)]{nsz} and \cite[(1.12)]{bm} (or \cite[(8.18)]{nz3})
we see that 
\begin{equation}
\label{eq:chiu}  \| \chi u \|_{{\mathcal H}} = 1 + {\mathcal O}( h^\infty ), 
\ \ u = \chi u + {\mathcal  O}(h^\infty)_{L^2_{\rm{loc}}}
\,. \end{equation}

Using the maximum principle as in~\cite[Lemma 2]{tz} and the estimates on the resolvent in~\cite[Lemma
1]{tz} we see that 
\begin{equation}
\label{eq:reso}  \| \chi R_{z_0} ( \Re z_0  , h ) \psi \|_{{\mathcal H} \to {\mathcal H} } = \mathcal
O ( h^{-M} ), \end{equation}
for some $ M$. Hence to obtain \eqref{eq:res2} we need to estimate
\begin{equation}
\label{eq:tens}    \frac{1} { |\Im  z_0 | }\| \psi u \otimes \chi u \|_{ \mathcal H \to  {\mathcal H} } =  \frac{ \| \psi u \|_{\mathcal H} \| \chi u \|_{{\mathcal H}} } { |\Im  z_0 |
} , \end{equation}
from below.

To estimate  $ \| \psi u \|_{\mathcal H} $ from below we write
\[\begin{split}
0 &= \Im  \langle  (P ( h ) -z_0)u, 
\indic_{ B ( 0 , R +t   ) } u \rangle_{\mathcal H}  
= \Im \langle P ( h ) u , \indic_{ B ( 0 , R +t   ) } u 
\rangle_{\mathcal H} - \Im z_0 \| \indic_{ B ( 0 , R + t ) } u \|^2_{\mathcal H} .
\end{split}\]
Since $ P ( h ) $ is self-adjoint on $ \mathcal H $ and acts as
a symmetric second order operator on $ \CIc ( \RR^n \setminus 
B ( 0 , R_0 )) $, we obtain
\[ \Im z_0 \| \indic_{ B ( 0 , R + t ) } u \|^2_{\mathcal H}
= - \Im \int_{ \partial B ( 0 , R + t ) } \bar u 
N ( x, h D ) u d S( x ) , \]
where $N(x,hD_x)$ is a first order semiclassical differential operator. For example, if $  P ( h ) = -h^2\Delta +V$, then 
$  N(x,hD_x) = { i} (x/|x|) \cdot hD_x  $.

Since from \eqref{eq:chiu}, 
$ \|\indic_{ B ( 0 , R +t ) }  u  \|^2_{\mathcal H} = 1 + {\mathcal O}_t ( h^\infty
)$, 
we see that
\[ \begin{split}  \textstyle{\frac13} |\Im z_0 | & \leq h \int_{\frac13}^{\frac23} \int_{ \partial B ( 0 , R+ t ) }
 | u| |N(x,hD_x) u| d S ( x ) dt
 \\ & 
=  h \int_{ B( 0 , R + \frac23 )   \setminus B ( 0 , R  + \frac13 ) } |
u | | N(x,h D_x) u  | dx \\
& \leq C' h \int_{ B ( 0 , R + 1 ) \setminus B ( 0 , R ) }  | u |^2 dx \leq C' h \| \psi u \|_{ \mathcal H}^2 , \end{split} \]
where we used the equation $(P-z_0)u=0$  and elliptic estimates to control  the first order term term $ N(x,hD_x) u $.
This shows that
\[ \| \psi u \|_{\mathcal H} \geq \sqrt{ { |\Im z_0| } / { C h} } ,\]
which combined with \eqref{eq:reso},\eqref{eq:tens} and
\eqref{eq:chiu} completes the proof of \eqref{eq:res2}.
\end{proof}


\section{A metric example}
\label{s:ame}

We start by using Theorem \ref{t:1} to obtain a generalization 
of Theorem \ref{t:1d} to higher dimensions and to more
general potentials: 

\begin{thm}
\label{t:2}
Consider a Schr\"odinger operator $ P ( h) = -h^2 \Delta + V $ on $L^2(\mathbb R^n)$ where $ V $
satisfies \eqref{eq:gdv} and \eqref{eq:barriera}. Suppose also that
\begin{equation}
\label{eq:KV0}  K_{V_0 } = \{ ( x_0 , 0 ) \},  \ \ V' ( x_0 ) = 0 ,  \ \ V'' (
x_0) > 0 . \end{equation}
 Then there exists $ z_0 $ satisfying the assumptions of Theorem
\ref{t:1} with 
\begin{equation}
\label{eq:t2}  d ( z_0 , \Res ( P ( h ) ) \setminus \{ z_0 \} ) > h
/ C, \ \ \ \ | \Im z_0 |< e^{ -c_0 /h } \,. \end{equation}
In particular in the notation of \eqref{eq:res2}, 
\begin{equation}
\label{eq:inpart}   \|  \psi ( P ( h ) - \Re z_0 \pm i 0 )^{-1} \chi \|_{L^2 \to L^2 } \geq \exp\frac{c}h ,  \end{equation}
for $ 0 < h < h_0 $ and  some $ c > 0 $.
\end{thm}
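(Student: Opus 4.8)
The plan is to take $z_0$ to be the ground-state \emph{shape resonance} generated by the well at $x_0$ and to read off the properties demanded by Theorem~\ref{t:1} from the semiclassical theory of such resonances. As a reference introduce the harmonic oscillator $Q(h):=-h^2\Delta+V_0+\tfrac12\langle V''(x_0)(x-x_0),x-x_0\rangle$. If $\nu_1,\dots,\nu_n>0$ are the eigenvalues of $V''(x_0)$ then $\Spec Q(h)=\{\,V_0+h\sum_k\sqrt{\nu_k/2}\,(2\alpha_k+1):\alpha\in\mathbb N^n\,\}$, so its lowest eigenvalue $\mu(h):=V_0+h\sum_k\sqrt{\nu_k/2}$ is simple and separated from the remainder of $\Spec Q(h)$ by at least $\kappa h$, where $\kappa:=2\min_k\sqrt{\nu_k/2}>0$.

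The heart of the matter is an appeal to the theory of shape resonances --- Helffer--Sj\"ostrand \cite{hsj} and G\'erard--Martinez \cite{gm}, and, in the present black box setting with a barrier, Nakamura--Stefanov--Zworski \cite{nsz} and Bony--Michel \cite{bm}. For $h$ small these give: (i) the resonances of $P(h)$ in a disc $D(V_0,\delta_1 h)$, $\delta_1$ a fixed large constant, are, with multiplicity, $\mathcal O(h^\infty)$-close to the eigenvalues near $V_0$ of the reference operator obtained by filling in the barrier, and the latter obey the harmonic approximation $V_0+h\mu_j+\mathcal O(h^2)$ with $\mu_j$ the eigenvalues of the Hessian oscillator; combined with the gap of the first paragraph (taking $\delta_1$ large but using the sub-disc $D(V_0,\tfrac14\kappa h)$), this produces a \emph{unique} resonance $z_0=z_0(h)$ in $D(V_0,\tfrac14\kappa h)$, it is simple, and $\Re z_0=\mu(h)+\mathcal O(h^2)=V_0+\mathcal O(h)$. (ii) Because $K_{V_0}=\{(x_0,0)\}$ and \eqref{eq:barriera} hold, the well at $x_0$ is surrounded by a barrier $\{V>V_0\}$ of positive Agmon width, so the resonant state $u$ concentrates near $x_0$ and decays exponentially across the barrier; the width is therefore $|\Im z_0|\leq e^{-c_0/h}$ for some $c_0>0$. (iii) Any resonance within distance $h$ of $z_0$ has real part $V_0+\mathcal O(h)$ and hence, by the same localization of resonant states, is one of the well resonances; distinct well resonances are $\geq\kappa h-\mathcal O(h^2)$ apart, so $d(z_0,\Res(P(h))\setminus\{z_0\})>h/C$. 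This is exactly \eqref{eq:t2}, and $z_0$ meets every hypothesis of Theorem~\ref{t:1} (with, e.g., $N=2$, since $h/C>h^2$ for $h$ small).

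To conclude, note that here $\mathcal H=L^2(\RR^n)$ and $0<|\Im z_0|\leq e^{-c_0/h}$, the strict positivity because Rellich's theorem forbids resonances on the positive real axis. Estimate \eqref{eq:res2} of Theorem~\ref{t:1} then gives $\|\psi(P(h)-\Re z_0-i0)^{-1}\chi\|_{L^2\to L^2}\geq(C_0\sqrt{|\Im z_0|\,h})^{-1}\geq e^{c_0/(2h)}/(C_0\sqrt h)\geq e^{c/h}$ for any $c<c_0/2$ and $h$ small, which is \eqref{eq:inpart} for the $-i0$ boundary value; the $\chi(\cdot)\chi$ bound of \eqref{eq:res11} likewise follows at once from \eqref{eq:res1}. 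The $+i0$ case is identical: the reality of $P(h)$ gives $R_h(E+i0)v=\overline{R_h(E-i0)\bar v}$ through the involution, so $\|\psi R_h(E+i0)\chi\|_{L^2\to L^2}=\|\psi R_h(E-i0)\chi\|_{L^2\to L^2}$, exactly as in the passage from \eqref{e:lower-bound-1} to \eqref{e:lower-bound-2}.

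The genuine work --- and the only place where the cited microlocal machinery is truly needed --- lies in the two uniqueness-type assertions in (i) and (iii): that the small disc contains exactly one resonance, and that no stray resonance (one not generated by the well) sits within $h/C$ of $z_0$. Both reduce to showing that a resonant state with real part $V_0+\mathcal O(h)$ is exponentially localized in the well, which is precisely what the barrier hypothesis \eqref{eq:barriera} together with \cite{hsj,gm,nsz,bm} deliver; the harmonic approximation for $\Re z_0$, the ground-state gap, and the exponential bound $|\Im z_0|\leq e^{-c_0/h}$ (which uses positivity of the Agmon width of the barrier) are then standard consequences of the same theory.
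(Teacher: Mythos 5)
Your argument is correct and follows essentially the same route as the paper: existence, simplicity, exponentially small width, and the $h/C$ separation of $z_0$ are obtained from the shape-resonance correspondence of \cite[Corollary, \S 5]{nsz} with a filled-in reference operator whose low-lying spectrum is controlled by the harmonic approximation (cf.\ \cite{hSj1}), and \eqref{eq:inpart} then follows from \eqref{eq:res2}. Your extra remarks (that $\Im z_0\neq 0$ by Rellich's theorem, and the reduction of the $+i0$ case to $-i0$ by reality) are details the paper leaves implicit, and they are handled correctly.
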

\begin{proof}
The existence of $ z_0 $ follows from Corollary in \cite[\S 5]{nsz}.
The reference operator $ P^\sharp ( h ) $ there 
can be chosen as $ P^\sharp ( h ) = - h^2 \Delta + V^\sharp ( x ) $,
where $ V^\sharp ( x ) = V ( x ) $ in a small neighbourhood of 
$ x_0 $ where $ x_0 $ is the only critical point 
and $ V^\sharp ( x) > V ( x_0) + \epsilon $, $ \epsilon > 0$,
outside of that neighbourhood.
Since the eigenvalue of $ P^\sharp ( h ) $ corresponding to the minimum $V_0= V (x_0) $ is separated
from other eigenvalues by $ h/C $ (see for instance 
\cite{hSj1} and references given there)
 the same corollary shows the 
separation from other resonances.
\end{proof}

\noindent
{\bf Remarks.} 
1. The condition \eqref{e:as-intro} implies that \eqref{eq:barriera}
and \eqref{eq:KV0} hold (since $H_p(x\cdot \xi)>0$ on $\{p=V_0\}$
except at $x=\xi=0$), but the converse is not true~-- see Figure~\ref{f1}.

\noindent
2. When $ V $ is analytic and satisfies
certain ``well-in-the-island'' hypotheses, Theorem \ref{t:2} follows from the work 
of Helffer--Sj\"ostrand \cite{hsj} and under these stronger assumptions   Theorem \ref{t:1} can then
be proved in the same way using the earlier results of
G\'erard--Martinez \cite{gm} in place of the results of
\cite{nsz}.

\noindent
3. For $ P (h ) = -h^2 \Delta + V $, 
and for $ E$'s satisfying \eqref{eq:KV0} (with $ V_0 = E$),
a result of 
Nakamura \cite[Proposition 4.1]{na} and \cite[Corollary, \S 5]{nsz}
show that
\[  \| \chi ( P ( h ) - E - i 0)^{-1} \chi \|_{L^2 \to L^2} \leq
C h^{-q} , \ \  | E - z_j ( h ) | \geq h^q , \]
where $ q \geq 1 $ and $ z_j ( h ) $ are the resonances of 
$ P ( h ) $. Since the density of $ \Re z_j ( h ) $ satisfies
a Weyl law, this means that the bound is $ \mathcal O ( h^{-q} ) $,
outside of a set of measure $ \mathcal O ( h^{q-n} ) $, $ q > n $.

The example in Theorem \ref{t:2} can be used directly to 
obtain examples of resolvent growth 
for asymptotically conic metrics of the 
type studied by Rodnianski--Tao \cite{rt}. 

\begin{thm}
\label{t:3}
Let $(M,g)$ be the following Riemannian manifold:
$$
M=\mathbb R_x \times\mathbb S^{n-1}_\theta,\quad
g=dx^2+V(x)^{-1}\,d\theta^2,  \ \ n > 1 ,
$$
where $d\theta^2$ is the round metric on the sphere of radius 1 and
$V(x)\in C^\infty(\mathbb R;(0,\infty))$ is a function satisfying
the assumptions of Theorem \ref{t:2} and 
$$ V ( x ) = \frac{1}{x^2} , \ \ |x| \geq R_0 .$$
Put
\[  \chi ( x ) = \indic_{ |x | < R_0  } , \ \ \ \psi ( x ) =
\indic_{  R-1 < |x| < R+1 } , \ \ R > R_0 . \]
Then 
there exists a sequence $ \lambda_k \to \infty $ such that
\begin{equation}
\label{eq:rota}
\| \psi ( -\Delta_g - \lambda_k \pm i0)^{-1} \chi \|_{L^2 ( 
M ) \to L^2 (  M ) } \geq 
\exp ( c \sqrt {\lambda_k} ), 
\end{equation}
for some constant $ c>0 $.
\end{thm}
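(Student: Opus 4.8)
The plan is to reduce Theorem~\ref{t:3} to Theorem~\ref{t:2} by a separation of variables that turns the Laplacian on $(M,g)$ into a Schr\"odinger operator on $\mathbb R$ with the desired potential $V$. Writing $\Delta_g$ in the coordinates $(x,\theta)$ with $g=dx^2+V(x)^{-1}\,d\theta^2$, the volume density is $V(x)^{-(n-1)/2}\,dx\,d\theta$, so
\[
\Delta_g=V(x)^{(n-1)/2}\partial_x\big(V(x)^{-(n-1)/2}\partial_x\cdot\big)+V(x)\Delta_{\mathbb S^{n-1}}.
\]
Conjugating by the unitary $u\mapsto V^{-(n-1)/4}u$ from $L^2(M,\operatorname{dvol}_g)$ to $L^2(\mathbb R\times\mathbb S^{n-1},dx\,d\theta)$ removes the first-order term and produces $-\Delta_g\cong -\partial_x^2 + W_0(x) + V(x)\Delta_{\mathbb S^{n-1}}$, where $W_0$ is a bounded potential built from $V',V''$, supported in $\{|x|\le R_0\}$ because $V(x)=x^{-2}$ is exact there (one should double-check that the conic model $V=1/x^2$ makes $W_0$ vanish for $|x|\ge R_0$; if a constant remains it is harmless and can be absorbed). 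Decomposing into spherical harmonics, on the $\ell=0$ sector one is left with the one-dimensional operator $-\partial_x^2 + W_0(x)$ on $\mathbb R$, and the higher sectors only add a nonnegative term $V(x)\ell(\ell+n-2)$ which pushes the relevant spectrum up and does not interfere near the energy of interest.

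Next I would match the semiclassical and the high-energy pictures. The operator $P(h)=-h^2\Delta+V$ of Theorem~\ref{t:2}, acting on the $\ell=0$ part, is $h^2$ times $-\partial_x^2 + h^{-2}V(x)$; the correspondence is $\lambda = 1/h^2$, up to the bounded correction $W_0$. Concretely, from Theorem~\ref{t:2} applied to $P(h)=-h^2\partial_x^2+V(x)$ (with $V$ as in the hypotheses, having a barrier and $K_{V_0}=\{(x_0,0)\}$, $V''(x_0)>0$) we obtain a resonance $z_0(h)$ with $\Re z_0=V_0+\mathcal O(h)$ and $|\Im z_0|<e^{-c_0/h}$, and hence \eqref{eq:inpart}:
\[
\|\psi(P(h)-\Re z_0\pm i0)^{-1}\chi\|_{L^2\to L^2}\ge \exp(c/h).
\]
Dividing the operator equation $(P(h)-E)=h^2(-\partial_x^2 + h^{-2}V - h^{-2}E)$ by $h^2$, the cutoff resolvent of $-\partial_x^2+h^{-2}V$ at spectral parameter $h^{-2}E$ equals $h^2$ times the cutoff resolvent of $P(h)$ at $E$, so the lower bound $\exp(c/h)$ survives with only a polynomial-in-$h$ (hence $\exp(c'\sqrt\lambda)$-absorbable) loss. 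Setting $\lambda_k := h_k^{-2}\Re z_0(h_k) = h_k^{-2}(V_0+\mathcal O(h_k))$ for a sequence $h_k\to 0$, and noting $\sqrt{\lambda_k}\asymp 1/h_k$, yields \eqref{eq:rota} on the $\ell=0$ sector; one then observes that $\psi,\chi$ act trivially in $\theta$ so the full $-\Delta_g$ resolvent restricted to $\ell=0$ inherits the bound, and since the norm over all of $L^2(M)$ dominates the norm on any invariant subspace, \eqref{eq:rota} follows. Finally the $\pm i0$ statement and the fact that $h^{-2}\Re z_0$ is not itself a pole of the continued resolvent (so the limiting absorption resolvent is defined) come from the properties of $z_0$ in \eqref{eq:t2} together with Rellich's theorem as cited in the introduction.

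The main obstacle I expect is bookkeeping rather than a deep new idea: verifying carefully that the unitary conjugation produces a potential $W_0$ that is supported (or constant) in the interaction region so that the transformed operator really is of the form $-h^2\partial_x^2+V$ covered by Theorem~\ref{t:2}, and tracking the exact dictionary $\lambda\leftrightarrow 1/h^2$ including the additive shift by $\Re z_0$ and the correction $W_0$, so that $\Re z_0$ corresponds to a genuine energy $\lambda_k$ away from other resonances with the gap $h/C$ of \eqref{eq:t2} translating to a gap $\gtrsim \lambda_k^{-1/2}\lambda_k = \lambda_k^{1/2}$ — in particular confirming that the non-$\ell=0$ channels, which contribute the harmonics $V(x)\ell(\ell+n-2)$, do not create nearby resonances at energy $\lambda_k$. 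Here one uses that for $|x|\ge R_0$ these channels have effective potential $\ell(\ell+n-2)/x^2$, which only raises the barrier, so the resonance structure near the bottom energy $V_0/h^2$ is governed entirely by the $\ell=0$ sector; making this precise is the one place where a short separate argument (or an appeal to the black-box framework of \cite{nsz} applied to the full operator) is needed.
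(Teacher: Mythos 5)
Your reduction hinges on the $\ell=0$ spherical-harmonic sector, and that is where the argument breaks: the $\ell=0$ sector cannot produce the exponential lower bound. After your conjugation by $V^{-(n-1)/4}$, the operator on the $k$-th sector is $-\partial_x^2+W_0(x)+k(k+n-2)V(x)$; on the $\ell=0$ sector the potential $V$ has disappeared entirely and only the bounded correction $W_0$ remains (which, incidentally, is not supported in $\{|x|\leq R_0\}$: for $V=x^{-2}$ it is the usual $(n-1)(n-3)/(4x^2)$ term, though that is a side issue). At energy $\lambda\to\infty$ the rescaled $\ell=0$ operator is $-h^2\partial_x^2+h^2W_0$ at energy $1$, a non-trapping problem, so its cutoff resolvent is polynomially bounded in $\lambda$ and certainly not of size $e^{c\sqrt\lambda}$. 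Your dictionary ``the cutoff resolvent of $-\partial_x^2+h^{-2}V$ at spectral parameter $h^{-2}E$ is $h^{-2}$ times that of $P(h)$ at $E$'' is correct algebra, but $-\partial_x^2+h^{-2}V$ is not the $\ell=0$ part of $-\Delta_g$: the coefficient of $V$ in the $k$-th sector is $k(k+n-2)$, which equals $h^{-2}$ only when $k\asymp 1/h\asymp\sqrt\lambda$. So your closing claim that ``the resonance structure near the energy $V_0/h^2$ is governed entirely by the $\ell=0$ sector'' is exactly backwards: the trapping at energy $\lambda_k$ lives in the channel $k\asymp\sqrt{\lambda_k/V_0}$.

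The paper's proof takes the semiclassical parameter from the angular momentum: with $h_k=(k(k+n-2))^{-1/2}$ one has, on the $k$-th sector, $h_k^2(-\Delta_g-\lambda)=P(h_k)-h_k^2\lambda$ where $P(h)=-h^2\partial_x^2+{(n-1)V'\over 2V}h^2\partial_x+V(x)$; that is, the factor $V(x)$ multiplying $\Delta_S$ becomes the semiclassical potential, which is precisely why $V$ was required to satisfy the hypotheses of Theorem~\ref{t:2}. One applies Theorem~\ref{t:2} to $P(h_k)$ (the first-order term is $\mathcal O(h)$ in the semiclassical calculus and does not affect the classical flow, so the results of \cite{nsz} and Theorem~\ref{t:1} still apply; alternatively one may conjugate it away as you do, which is a legitimate device but does not change the main point), sets $\lambda_k=\Re z_0(h_k)/h_k^2$, and uses the orthogonal decomposition \eqref{eq:Rla}: the norm of $\psi(-\Delta_g-\lambda\pm i0)^{-1}\chi$ is the supremum over $k$ of the sector norms, hence bounded below by the single term $k$ where \eqref{eq:inpart} applies. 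In particular no argument is needed that other channels ``do not create nearby resonances''; a lower bound on one invariant sector suffices, as you yourself note elsewhere. With the semiclassical parameter reattached to the angular momentum your scaling $\sqrt{\lambda_k}\asymp 1/h_k$ and the conclusion \eqref{eq:rota} do go through, but this correction is the central idea of the proof, not bookkeeping.
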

\begin{proof}
In the $(x,\theta)$ coordinates, the Laplacian $\Delta_g$ has the form
$$
\Delta_g=\partial_x^2-{(n-1)V'(x)\over 2V(x)}\partial_x+V(x)\Delta_S.
$$
Here $\Delta_S$ is the Laplacian on $\mathbb S^{n-1}$.
For $k\geq 0$, let $Y_k(\theta)$ be (any) spherical harmonic of order $k$, i.e. a smooth function on $\mathbb S^{n-1}$ such that
$$
(-\Delta_S-k(k+n-2))Y_k=0,\quad
\|Y_k\|_{L^2(\mathbb S^{n-1})}=1,
$$
see for example~\cite[\S 17.2]{ho1} for the spectrum of $\Delta_S$. Then for $u(x)\in C^\infty(\mathbb R)$ and $\lambda\in\mathbb R$, we have
$$
-\Delta_g (u(x)Y_k(\theta))=\Big(-\partial_x^2+{(n-1)V'(x)\over 2V(x)}\partial_x+k(k+n-2)V(x)\Big)u(x) Y_k(\theta).
$$
Put
$ h_k:=\big(k(k+n-2)\big)^{-1/2}$ 
so that
$$
h_k^2(-\Delta_g-\lambda)(u(x)Y_k(\theta))=(P(h_k) - h^2_k \lambda  )u(x)Y_k(\theta),
$$
where
$$
P(h):=-h^2\partial_x^2+{(n-1)V'(x)\over 2V(x)}h^2\partial_x+V(x) \,. 
$$
Let $ R ( \lambda ) := ( - \Delta_g - \lambda )^{-1} $ for $
\lambda\not\in [0,+\infty) $. It follows that
\begin{gather}
\label{eq:Rla}
\begin{gathered}    R ( \lambda ) = \sum_{k \in\NN} h_k^2 ( P ( h_k ) - h_k^2 \lambda )^{-1}
\otimes \Pi_k : L^2 ( M ) \to L^2 ( M ) , 
  \\  L^2 ( M ) \simeq L^2 ( \RR , V ( x )^{
 -\frac{n-1}2 } dx )\otimes L^2 (
\SP^{n-1} ) , \end{gathered}
\end{gather}
where $ \Pi_k : L^2 ( \SP^{n-1} )  \to L^2 ( \SP^{n-1} ) ) $ is the
orthogonal projection onto the space of spherical harmonics of order $
k $. The operator $ R ( \lambda ) : \CIc ( M ) \to \CI ( M )  $ continues meromorphically to $
\Im \lambda \leq 0 $, and $ ( P ( h_k ) - h_k^2 \lambda )^{-1} : \CIc (
\RR ) \to \CI ( \RR )$ continues meromorphically for each $ k $. Hence
\eqref{eq:Rla} is valid for $ \Im \lambda \leq 0 $, 
with the operator acting
on  $ \CIc ( M ) \simeq \CIc ( \RR ) \otimes \CIc ( \SP^{n-1} ) $.

Hence, 
\[ \begin{split}
 \| \psi ( - \Delta_g - \lambda\pm i0 )^{-1} \chi\|_{ L^2 ( M ) \to L^2 ( M ) }
&  = \| \psi R ( \lambda\pm i0 ) \chi \|_{ L^2 ( M ) \to L^2 ( M )} \\
& = \sup_{
  k \in \NN } h_k^2 \| \psi ( P ( h_k ) - h_k^2 \lambda  \pm i0)^{-1} \chi
\|_{ L^2_x \to L^2_x } , 
\end{split} \]
where $ L_x^2 := L^2 ( \RR , V( x )^{-\frac{n-1}2} dx ) $. 
We now apply Theorem \ref{t:2} to $ P ( h_k ) $ and put
$ \lambda_k = \Re z_0 ( h_k )/h_k^2 $. The estimate 
\eqref{eq:rota} follows from \eqref{eq:inpart}.
Theorem~\ref{t:2} applies to the operator $P(h)$ despite the presence of a first order term,
as this term is of order $\mathcal O(h)$ in the semiclassical calculus and thus does not affect
the classical Hamiltonian flow $H_p$, and the results of~\cite{nsz} and Theorem~\ref{t:1} apply
to a wide class of semiclassical differential operators including $P(h)$.
\end{proof}

\def\arXiv#1{\href{http://arxiv.org/abs/#1}{arXiv:#1}}

\end{document}